\newtheorem{theorem}{Theorem}[section]
\newtheorem{lemma}[theorem]{Lemma}
\newtheorem{proposition}[theorem]{Proposition}
\theoremstyle{definition}
\theoremstyle{remark}
\newtheorem{remark}[theorem]{Remark}
\numberwithin{equation}{section}
\newcommand{\SL}{{\mathrm {SL}}}
\newcommand{\PSL}{{\mathrm {PSL}}}
\newcommand{\SU}{{\mathrm {SU}}}
\newcommand{\PSU}{{\mathrm {PSU}}}
\newcommand{\Spin}{{\mathrm {Spin}}}
\newcommand{\Sp}{{\mathrm {Sp}}}
\newcommand{\PSp}{{\mathrm {PSp}}}
\newcommand{\Aut}{{\mathrm {Aut}}}
\newcommand{\Mult}{{\mathrm {Mult}}}
\newcommand{\Schur}{{\mathrm {Schur}}}
\newcommand{\Irr}{{\mathrm {Irr}}}
\newcommand{\St}{{\mathrm {St}}}
\newcommand{\CC}{{\mathbb C}}
\newcommand{\ZZ}{{\mathbb Z}}
\newcommand{\NN}{{\mathbb N}}
\newcommand{\ta}{\hspace{0.5mm}^{2}\hspace*{-0.2mm}}
\newcommand{\cd}{{\mathrm {cd}}}
\begin{document}
\title[Complex group algebras of quasisimple classical groups]
{Quasisimple classical groups\\ and their complex group algebras}

\author{Hung Ngoc Nguyen}
\address{Department of Mathematics, The University of Akron, Akron,
Ohio 44325, United States} \email{hungnguyen@uakron.edu}

\subjclass[2010]{Primary 20C33, 20C15}

\keywords{Complex group algebras, quasisimple groups, classical
groups}

\date{\today}

\begin{abstract} Let $H$ be a finite quasisimple classical group, i.e. $H$ is perfect and $S:=H/Z(H)$ is a finite simple classical
group. We prove in this paper that, excluding the cases when the
simple group $S$ has a very exceptional Schur multiplier such as
$\PSL_3(4)$ or $\PSU_4(3)$, $H$ is uniquely determined by the
structure of its complex group algebra. The proofs make essential
use of the classification of finite simple groups as well as the
results on prime power character degrees and relatively small
character degrees of quasisimple classical groups.
\end{abstract}

\maketitle


\section{Introduction}\label{section Introduction}

An important question in representation theory of finite groups is
the extent to which the complex group algebra of a finite group
determines the group or its properties. In late 1980s, Isaacs proved
that if $\CC G\cong\CC H$ and $p$ is a prime, then $G$ has a normal
$p$-complement if and only if $H$ has a normal $p$-complement, and
therefore the nilpotency of a group is determined by the complex
group algebra of the group (cf.~\cite{Isaacs}). Later on, Hawkes
gave a counterexample showing that the same statement does not hold
for supersolvability (cf.~\cite{Hawkes}). It is still unknown
nowadays whether the solvability of a finite group is preserved by
its complex group algebra
(cf.~\cite[Problem~11.8]{Mazurov-Khukhro}).

While some properties of a solvable group $G$ is determined by $\CC
G$, it is well known that $G$ in general is not determined by $\CC
G$. For instances, $\CC D_8\cong \CC Q_8$ or Dade even constructed
two non-isomorphic metabelian groups with isomorphic group algebras
over any field (cf.~\cite{Dade}).

In contrast to solvable groups, simple groups or more generally
quasisimple groups are believed to have a stronger connection with
their complex group algebras. Recently, Tong-Viet in a series of
papers~\cite{Tong-Viet1,Tong-Viet2,Tong-Viet} has succeeded in
proving that, if $G$ is a finite group and $H$ is a finite simple
group such that $\CC G\cong\CC H$, then $G\cong H$.

The main purpose of this paper is to go one step further and improve
Tong-Viet's result to quasisimple groups in the case of classical
groups.

\begin{theorem}\label{main theorem} Let $G$ be a finite group and let $H$ be a quasisimple group such that $H/Z(H)$ is
a simple classical group different from $\PSL_3(4)$ and $\PSU_4(3)$.
If $\CC G\cong\CC H$ then $G\cong H$. In other words, every
quasisimple classical group $H$ with $H/Z(H)\neq\PSL_3(4)$ and
$\PSU_4(3)$ is uniquely determined (up to isomorphism) by the
structure of its complex group algebra.
\end{theorem}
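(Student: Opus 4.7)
The hypothesis $\CC G\cong\CC H$ is equivalent to $|G|=|H|$ together with the coincidence of the multisets of complex irreducible character degrees of $G$ and $H$ (with multiplicities). The plan is to show in three stages that $G$ must be a central extension of $S:=H/Z(H)$ with center of order $|Z(H)|$, and then to identify that extension with $H$. The first stage is to produce a normal subgroup $N\trianglelefteq G$ with $G/N\cong S$. Since every character of $S$ inflates to $H$, we have $\cd(S)\subseteq \cd(H)=\cd(G)$ with matching multiplicities at those degrees; combining Tong--Viet's theorem that $\CC K\cong\CC S$ forces $K\cong S$ with the fingerprint given by the smallest nontrivial degree of $H$ and by its prime-power degrees, I would produce the quotient $G/N\cong S$, using the classification of finite simple groups to eliminate every alternative nonabelian composition factor.

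The second stage is to prove that $N$ is central in $G$. From $|G|=|H|$ we get $|N|=|Z(H)|$. Splitting $\Irr(G)$ into characters inflated from $G/N\cong S$ and characters nontrivial on $N$, the latter must account, degree-for-degree, for the characters of $H$ that are nontrivial on $Z(H)$. Applying the formula $\sum \chi(1)^2=|G|$ to each part and invoking the results on relatively small character degrees of quasisimple classical groups cited in the abstract, I would force $G$ to act trivially on $N$, so that $N\leq Z(G)$; a comparison of the multiplicities of the faithful degrees of $H$ against those of the corresponding characters of $G$ pins down the isomorphism type of $N$ as well.

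The final stage uses that, outside the two excluded cases, a quasisimple cover of $S$ with center of order $|Z(H)|$ is uniquely determined up to isomorphism by its multiset of character degrees: this is precisely where the very exceptional Schur multipliers of $\PSL_3(4)$ and $\PSU_4(3)$ obstruct the argument, since they admit several non-isomorphic covers with the same degree multiset. With this uniqueness in hand, $G\cong H$ follows. The hardest step will be the first, because character-degree matching alone does not obviously produce $S$ as a quotient of $G$; this is where the results on prime-power degrees of quasisimple classical groups, together with the classification of finite simple groups, provide the arithmetic constraints that only quasisimple covers of $S$ can satisfy, and where the bulk of the family-by-family case analysis will live.
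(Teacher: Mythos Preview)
Your overall plan matches the paper's, and Stage~1 is essentially Proposition~\ref{non-containment of character degrees of quasisimple groups}: take a maximal normal subgroup $M$ of the (necessarily perfect) group $G$, note $|S|\mid|G|\mid|\Schur(S)|$ and $\cd(S)\subseteq\cd(G)\subseteq\cd(\Schur(S))$, and run a family-by-family elimination via CFSG, Zsigmondy primes, the Steinberg degree, and small-degree tables. Stage~3 is also close to the paper's Lemma~\ref{G is uniquely determined by S and |G|}, though the paper is sharper: once $G$ is known to be a quotient of $\Schur(S)$, the order $|G|$ alone determines it, because $\Mult(S)$ is cyclic in all remaining cases except a short list where an outer automorphism of order~$3$ permutes the candidate kernels; no appeal to the full degree multiset is needed.

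The genuine gap is Stage~2. You propose to force $N\leq Z(G)$ by partitioning $\Irr(G)$ and applying $\sum\chi(1)^2=|G|$ together with small-degree results, but the degree multiset of $G$ equals that of $H$ by hypothesis, so this sum identity is satisfied automatically and yields no constraint on how $G$ acts on $N$; nor is it clear why $N$ should even be abelian, since only $|N|=|Z(H)|$ is known. The paper's mechanism is entirely different. The key observation (Lemma~\ref{S cannot be embedded in Aut(A)}) is that for $S\neq\PSL_3(4),\PSU_4(3)$ and any abelian $A$ with $|A|\leq|\Mult(S)|$ one has $|S|>|\Aut(A)|$. One then climbs the derived series of $M$: assuming inductively that $G/M^{(i)}$ is a quotient of $\Schur(S)$, the abelian section $M^{(i)}/M^{(i+1)}$ is too small to carry a nontrivial $S$-action, hence is central in $G/M^{(i+1)}$, which is therefore a stem extension and again a quotient of $\Schur(S)$ (Lemma~\ref{G/Mi is isomorphic to a quotient of Schur(S)}). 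If $M$ were nonsolvable, a nonabelian chief factor below the terminal derived subgroup, combined with Gallagher's lemma and the Steinberg character, would produce a proper multiple of $\St_S(1)$ in $\cd(\Schur(S))$, contradicting Lemma~\ref{degree of S whose no proper multiple is degree of Schur(S)} (see Lemma~\ref{G is isomorphic to a quotient of Schur(S)}). This embedding obstruction, not a multiplicity count, is what makes Stage~2 go through, and it is precisely where the exclusion of $\PSL_3(4)$ and $\PSU_4(3)$ first enters.
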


We should mention that, as the complex group algebra of a finite
group determines and also is determined by the degrees (counting
multiplicities) of irreducible characters of the group, the
statement of the theorem is equivalent to say that, excluding those
exceptions, every quasisimple classical group is determined by its
\emph{multiset of character degrees} or its
\emph{character-degree-frequency function} (cf.~\cite{Hawkes} for
the definition).

If $G$ is a finite group and $H$ is a quasisimple group having the
same complex group algebra then it is easy to see that they have the
same order and multiset of character degrees (i.e., the set of
character degrees counting multiplicities). Moreover, as $H$ has a
unique linear character, so does $G$ and hence $G$ is perfect. We
will prove that, if $M$ is a maximal normal subgroup of $G$ then
$G/M$ must be isomorphic to $S:=H/Z(H)$. This basically eliminates
the involvement of all nonabelian simple group other than $S$ in the
structure of $G$. Let $\cd(G)$ denote the character degree set of
$G$ and let $\Schur(S)$ denote the Schur cover of $S$. On the way to
the proof of Theorem~\ref{main theorem}, we in fact prove the
following:

\begin{proposition}\label{non-containment of character degrees of quasisimple groups} Let $S$ be a simple classical group. Let $G$ be a nontrivial perfect group and $M$
be a maximal subgroup of $G$ such that $|S|\mid|G|\mid |\Schur(S)|$
and $\cd(S)\subseteq\cd(G)\subseteq \cd(\Schur(S))$. Then $G/M\cong
S$.
\end{proposition}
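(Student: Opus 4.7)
Set $L := G/M$. Since $G$ is perfect and $M$ is a maximal (normal) subgroup, $L$ is a non-abelian finite simple group. By hypothesis $|L|$ divides $|\Schur(S)|$ and, via inflation, $\cd(L)\subseteq \cd(G)\subseteq \cd(\Schur(S))$. The goal is to force $L\cong S$, and I would do so by elimination, leaning on the classification of finite simple groups together with the two ingredients the introduction singles out: prime-power character degrees and small character degrees of quasisimple classical groups.

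The first, purely numerical, reduction uses the order bound. Since $|\Schur(S)|/|S|=|\Mult(S)|$ is uniformly (and almost always very) small for $S$ classical, the list of non-abelian simple $L$ with $|L|\bigm||\Schur(S)|$ is short for each $S$. By CFSG the surviving candidates split into three families: (a) $L$ alternating or sporadic; (b) $L$ of Lie type in the defining characteristic $p$ of $S$; (c) $L$ of Lie type in a characteristic $r\neq p$.

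Family (a) is dispatched by comparing the smallest non-trivial degree of $L$ (equal to $n-1$ for $A_n$, and tabulated for the sporadic groups) with the known lower bound on non-trivial degrees of $\Schur(S)$ provided by the cited results on small character degrees; only finitely many configurations pass this filter, and these are settled individually. Family (c) is ruled out by the Steinberg character of $L$: its degree is a power of $r$, hence must appear as an $r$-power element of $\cd(\Schur(S))$, but the cited classification of prime-power degrees of quasisimple classical groups lists all such powers and they are (essentially) powers of $p$, forcing a contradiction or collapse into family (b). Family (b) is the most delicate: the $p$-power degrees of $L$ together with its smallest non-trivial degree must both lie in $\cd(\Schur(S))$, and these two invariants, combined with the order constraint, pin down the rank and field size of $L$ and ultimately its root datum, yielding $L\cong S$.

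The main obstacle I anticipate is the low-rank, small-$q$ regime, where exceptional isomorphisms (for example $\PSL_2(7)\cong\PSL_3(2)$, $A_8\cong\PSL_4(2)$, $\PSU_4(2)\cong\PSp_4(3)$) and accidental coincidences of small character-degree sets make the generic numerical arguments fail. These cases have to be checked by hand against the Atlas, verifying for each small $S$ that no ``wrong'' simple group $L$ satisfies $\cd(L)\subseteq \cd(\Schur(S))$; this ad hoc verification, rather than the generic argument, is where I expect most of the labour of the proof to lie, and is the natural source of the delicate behaviour that forces the exclusion of $\PSL_3(4)$ and $\PSU_4(3)$ in Theorem~\ref{main theorem}.
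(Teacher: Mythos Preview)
Your high-level strategy---split $L$ by CFSG into alternating/sporadic, cross-characteristic Lie type, and same-characteristic Lie type, then eliminate each family via character-degree and order constraints---is exactly the paper's. Your treatment of family~(c) (cross-characteristic) via the Steinberg degree of $L$ and Malle--Zalesskii matches the paper precisely, and your description of family~(b) is correct in spirit, though the paper's actual weapons are more specific: from $\St_L(1)=\St_S(1)$ one gets a Diophantine relation between the parameters, Zsigmondy primes in the order of $L$ force bounds on these parameters, and then a particular small \emph{unipotent} character of $L$ (not merely the abstract minimal degree) is shown to be absent from the explicit list of degrees below roughly $q^{2n}$ of $\Schur(S)$.

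Two points deserve correction. First, for family~(a) the paper does not rely solely on comparing $d_1(L)$ with $d_1(\Schur(S))$. For sporadic $L$ it instead takes $\chi\in\Irr(G)$ of degree $|S|_p$ (coming from $\cd(S)\subseteq\cd(G)$, a hypothesis you never invoke), applies Clifford theory and Gallagher's lemma to $\chi_M$, and uses the key fact that no proper multiple of $\St_S(1)$ lies in $\cd(\Schur(S))$; for alternating $L=A_m$ it combines the minimal-degree bound $m-1\geq d_1(\Schur(S))$ with the \emph{maximal}-degree bound $b(A_m)\geq 2^{m-1}$ to force a contradiction. Your minimal-degree-only approach is plausible but is not what the paper does and would require its own verification.

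Second, and more importantly, your final paragraph misidentifies the source of the exclusions. Proposition~\ref{non-containment of character degrees of quasisimple groups} is stated and proved for \emph{all} simple classical $S$, including $\PSL_3(4)$ and $\PSU_4(3)$; these small cases are simply checked against the Atlas. The exclusion of $\PSL_3(4)$ and $\PSU_4(3)$ in Theorem~\ref{main theorem} arises much later, in the passage from ``$G/M\cong S$'' to ``$G\cong H$'': one needs that $S$ cannot embed in $\Aut(A)$ for any abelian $A$ with $|A|\leq|\Mult(S)|$, and this fails precisely for those two groups because of their exceptionally large Schur multipliers. The difficulty has nothing to do with the character-degree comparison in this proposition.
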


This proposition will be proved in sections~\ref{section-linear
groups}, \ref{section-unitary groups}, \ref{section-symplectic
groups}, and~\ref{section-orthogonal groups in even dimension} for
linear groups, unitary groups, symplectic groups, and orthogonal
groups, respectively. Assume for now that it is true. Since
$|G|=|H|$ and $G/M\cong H/Z(H)$, we deduce that $|M|=|Z(H)|$. It
follows that, if $H$ is simple then $M$ is trivial and we would have
immediately that $G\cong H$. However, since we are working on a
quasisimple group $H$, the problem becomes much more complicated. To
illustrate the difficulty of the problem for quasisimple groups, let
us mention that we are unable to answer whether the complex group
algebra of a quasisimple group, whose quotient by its center is
$\PSL_3(4)$ or $\PSU_4(3)$, determines the group uniquely up to
isomorphism. This is due to the very exceptional Schur multiplier of
these groups (cf.~Lemmas~\ref{S cannot be embedded in Aut(A)}
and~\ref{G/Mi is isomorphic to a quotient of Schur(S)}).

In section~\ref{section-main result}, we will show that if $H/Z(H)$
is different from $\PSL_3(4)$ and $\PSU_4(3)$, then $H/Z(H)$ can not
be imbedded into the automorphism group of an abelian group of order
not larger than that of the Schur multiplier of $H/Z(H)$. We then
use this to show that $G$ is isomorphic to a quotient of the Schur
cover of $H/Z(H)$ (cf.~Lemma~\ref{G is isomorphic to a quotient of
Schur(S)}). From there, with a careful analysis of the Schur
multipliers of finite simple groups, we are able to show that
$G\cong H$. We note that many results in section~\ref{section-main
result} are stated for every finite group of Lie type, not only
classical groups.

With the techniques presented here, we hope to continue the problem
for the Schur covers of symmetric and alternating groups
(cf.~\cite{Nguyen-TongViet}), quasisimple exceptional groups of Lie
type as well as quasisimple sporadic groups. To end this
introduction, we put a list of notation that will be used throughout
the paper.

\medskip

\begin{tabular}{lll}
Notation & Meaning\\\hline
$Z(G)$& the center $G$\\
$\Aut(G)$& the automorphism group of $G$\\
$\Irr(G)$& the set of irreducible characters of $G$\\
$\cd(G)$ & the set of irreducible character degrees of $G$\\
$d_i(G)$& the $i$th smallest number in $\cd(G)\backslash\{1\}$\\
$G^{(i)}$& the $i$th derived subgroup of $G$\\
 $\Mult(G)$ & the Schur multiplier of $G$\\
$\Schur(G)$& the Schur cover of a perfect group $G$ \\
 $\St_G$& the Steinberg character of a group of Lie type $G$\\
 $n_p$, $p$ prime & the $p$-part of an integer number $n$\\\hline
\end{tabular}


\section{Character degrees of quasisimple classical groups}\label{section-character
degrees}

In this section, we prove a part of Proposition~\ref{non-containment
of character degrees of quasisimple groups}. More explicitly, we
show that $G/M$ is a simple group of Lie type in characteristic $p$.
We start with a couple of known lemmas.

\begin{lemma}[Gallagher's lemma]\label{Gallagher} Let $M\unlhd G$ and $\chi\in \Irr(G)$. If
$\chi_M\in\Irr(M)$ then $\chi\theta\in\Irr(G)$ for every
$\theta\in\Irr(G/M)$.
\end{lemma}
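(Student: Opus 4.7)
The plan is to establish irreducibility of $\chi\theta$ by showing directly that $[\chi\theta,\chi\theta]_G=1$. First observe that $\theta\in\Irr(G/M)$ can be inflated to a character of $G$ that is constant on cosets of $M$, so $\chi\theta$ is the character of the tensor product of two representations of $G$ and is automatically a character of $G$; only its irreducibility requires work.

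My first step is to exploit the fact that $|\theta(g)|^2$ depends only on the coset $gM$ in order to reorganize the inner product sum. Splitting the sum over $G$ into an outer sum over cosets of $M$ and an inner sum over $M$ gives
\[
[\chi\theta,\chi\theta]_G = \frac{1}{|G|}\sum_{gM\in G/M}|\theta(g)|^2\sum_{m\in M}|\chi(gm)|^2.
\]
The proof then hinges on the identity
\[
\sum_{m\in M}|\chi(gm)|^2 = |M| \qquad \text{for every } g\in G,
\]
because substituting it into the previous display yields $[\chi\theta,\chi\theta]_G=\frac{|M|}{|G|}\sum_{gM}|\theta(g)|^2=[\theta,\theta]_{G/M}=1$, where the last equality uses that $\theta$ is irreducible on $G/M$.

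The main obstacle, and essentially the only nontrivial ingredient, is the displayed identity. To prove it I would fix a unitary representation $\Pi$ of $G$ affording $\chi$, write $\chi(gm)=\tr(\Pi(g)\Pi(m))$, and expand $|\chi(gm)|^2$ as a fourfold sum of products of matrix coefficients of $\Pi$. Summing over $m\in M$ and applying the Schur orthogonality relations to the matrix coefficients of the restriction $\Pi|_M$---whose irreducibility is precisely the hypothesis $\chi_M\in\Irr(M)$---collapses the sum to
\[
\sum_{m\in M}|\chi(gm)|^2 = \frac{|M|}{\chi(1)}\sum_{i,k}|\Pi(g)_{ik}|^2.
\]
Unitarity of $\Pi(g)$ gives $\sum_{i,k}|\Pi(g)_{ik}|^2 = \tr(\Pi(g)\Pi(g)^{\ast}) = \chi(1)$, so the right-hand side equals $|M|$, as required. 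Once this identity is in place, the rest of the argument is pure bookkeeping.
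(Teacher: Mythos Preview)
Your proof is correct. The identity $\sum_{m\in M}|\chi(gm)|^2=|M|$ is exactly the right engine, and your derivation of it via Schur orthogonality for the matrix coefficients of the irreducible restriction $\Pi|_M$, followed by the unitarity computation $\sum_{i,k}|\Pi(g)_{ik}|^2=\chi(1)$, is clean and complete. The subsequent bookkeeping is also correct.

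As for comparison with the paper: there is nothing to compare. The paper states Gallagher's lemma as a named, known result and gives no proof; it is quoted only so that it can be invoked later (e.g., in the proof of Proposition~\ref{G/M is a simple group of Lie type in characteristic p} and Lemma~\ref{G is isomorphic to a quotient of Schur(S)}). Your argument is one of the standard proofs of this classical fact; another common route computes $[\chi\theta,\chi\theta']_G$ simultaneously for all $\theta,\theta'\in\Irr(G/M)$ by observing that $\chi\cdot\rho_{G/M}=(\chi_M)^G$ (where $\rho_{G/M}$ is the regular character of $G/M$ inflated to $G$) and then comparing inner products, which in one stroke yields both irreducibility and pairwise distinctness of the $\chi\theta$. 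Your direct orthogonality computation is equally valid and arguably more transparent for the irreducibility statement alone.
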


\begin{lemma}[\cite{Huppert1}, Lemma~3]\label{lemmaHuppert2} Let $M\unlhd
G$ and let $\theta\in\Irr(M)$ be $G$-invariant. If $\chi\in\Irr(G)$
lying above $\theta$, then $\chi=\theta_0\tau$, where $\theta_0$ is
a character of an irreducible projective representation of $G$ of
degree $\theta(1)$ and $\tau$ is a character of an irreducible
projective representation of $G/M$.
\end{lemma}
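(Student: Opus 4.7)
The plan is to use Clifford theory adapted to projective representations. First, I would use the $G$-invariance of $\theta$ to construct a projective representation $\mathcal{P}$ of $G$ extending the ordinary representation $\mathcal{D}$ of $M$ that affords $\theta$. For each $g\in G$, the hypothesis $\theta^g=\theta$ yields an equivalence between $m\mapsto\mathcal{D}(gmg^{-1})$ and $\mathcal{D}$, so one can choose a matrix $\mathcal{P}(g)$ satisfying $\mathcal{P}(g)\mathcal{D}(m)\mathcal{P}(g)^{-1}=\mathcal{D}(gmg^{-1})$, normalized so that $\mathcal{P}(m)=\mathcal{D}(m)$ for $m\in M$. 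Then $\mathcal{P}$ is a projective representation of $G$ whose associated $2$-cocycle is trivial on $M$, and the character $\theta_0:=\tr\mathcal{P}$ has degree $\theta(1)$.

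Next, let $\mathcal{X}$ be an ordinary representation affording $\chi$. Clifford's theorem together with $G$-invariance of $\theta$ forces $\mathcal{X}|_M\cong e\mathcal{D}$ for some positive integer $e$, so after a change of basis $\mathcal{X}(m)=I_e\otimes\mathcal{D}(m)$ for $m\in M$. Define $\mathcal{Q}(g):=\mathcal{X}(g)\bigl(I_e\otimes\mathcal{P}(g)^{-1}\bigr)$. A direct computation using the intertwining relation for $\mathcal{P}$ shows that $\mathcal{Q}(g)$ commutes with $I_e\otimes\mathcal{D}(m)$ for every $m\in M$; Schur's lemma, applied to the irreducible $\mathcal{D}$, therefore forces $\mathcal{Q}(g)=\mathcal{R}(g)\otimes I_{\theta(1)}$ for a unique $\mathcal{R}(g)\in\GL_e(\CC)$. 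Substituting back yields the tensor decomposition $\mathcal{X}(g)=\mathcal{R}(g)\otimes\mathcal{P}(g)$.

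From here, multiplicativity of $\mathcal{X}$ combined with projective-multiplicativity of $\mathcal{P}$ forces $\mathcal{R}$ to be a projective representation whose $2$-cocycle is the inverse of that of $\mathcal{P}$. Because $\mathcal{P}|_M$ is already an ordinary representation, comparing $\mathcal{X}(m)=\mathcal{R}(m)\otimes\mathcal{P}(m)$ with $\mathcal{X}(m)=I_e\otimes\mathcal{D}(m)$ forces each $\mathcal{R}(m)$ to be a scalar matrix, so $\mathcal{R}$ descends to a projective representation of $G/M$. Irreducibility of $\mathcal{X}$ transfers, via the tensor decomposition, to irreducibility of $\mathcal{R}$. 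Taking traces gives $\chi=\theta_0\tau$ with $\tau:=\tr\mathcal{R}$, which is the desired factorization.

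The main obstacle is the scalar-cocycle bookkeeping: one must track the projective factor sets consistently, confirm that the cocycle of $\mathcal{R}$ genuinely descends to $G/M$ (using triviality of the $\mathcal{P}$-cocycle on $M$), and verify that $\tau$ is well-defined on $G/M$ rather than merely on $G$. These verifications are standard but easy to mis-handle without care.
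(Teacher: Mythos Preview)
Your argument is correct and is precisely the classical Clifford-theoretic construction (build a projective extension $\mathcal{P}$ of $\theta$ to $G$, tensor-factor $\mathcal{X}$ against $\mathcal{P}$ using Schur's lemma, and read off the complementary projective factor $\mathcal{R}$ on $G/M$). The paper itself offers no proof of this lemma: it is quoted verbatim from \cite{Huppert1}, Lemma~3, so there is nothing to compare against beyond noting that Huppert's original argument follows exactly the line you sketch. Your cautionary remarks about the cocycle bookkeeping are apt but, as you say, routine.
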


\begin{lemma}[\cite{Tong-Viet1}, Lemma~2.2]\label{maximal degree of Am is
2^{m-1}} If $n\geq10$ then the maximal degree of $A_n$ is at least
$2^{n-1}$.
\end{lemma}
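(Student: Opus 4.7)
The plan is to pass from $A_n$ to the symmetric group $S_n$ via the index-$2$ inclusion. For any partition $\lambda \vdash n$, the irreducible $S_n$-character $\chi^\lambda$ either restricts irreducibly to $A_n$ (when $\lambda \neq \lambda'$), producing an irreducible character of $A_n$ of degree $f^\lambda$, or it splits into two irreducibles of $A_n$ of degree $f^\lambda/2$ (when $\lambda = \lambda'$). Consequently $b(A_n) \geq b(S_n)/2$, and it is enough either to establish $b(S_n) \geq 2^n$ or, more efficiently, to exhibit for each $n \geq 10$ a non-self-conjugate partition $\lambda \vdash n$ with $f^\lambda \geq 2^{n-1}$.

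For large $n$ the key input is the Plancherel identity $\sum_{\lambda \vdash n} (f^\lambda)^2 = n!$, which immediately yields $b(S_n) \geq \sqrt{n!/p(n)}$, where $p(n)$ denotes the number of partitions of $n$. Combining this with the Hardy--Ramanujan estimate $p(n) \leq e^{\pi\sqrt{2n/3}}$ and Stirling's formula $n! \geq (n/e)^n\sqrt{2\pi n}$, the right-hand side exceeds $2^n$ as soon as $n$ passes a small explicit threshold $n_0$: indeed, $\tfrac{1}{2}\log n!$ grows like $\tfrac{n}{2}\log n$, whereas $n\log 2 + \tfrac{1}{2}\log p(n)$ grows only linearly in $n$ (with a sublinear correction). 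A direct numerical check confirms that $n_0 = 14$ works.

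The finitely many remaining cases $10 \leq n \leq 13$ I would handle by hand using the hook-length formula $f^\lambda = n!/\prod_{(i,j)\in\lambda} h(i,j)$, exhibiting a single non-self-conjugate partition $\lambda \vdash n$ whose degree beats $2^{n-1}$. For $n = 10$, for example, $\lambda = (4,2,2,1,1)$ has conjugate $(5,3,1,1) \neq \lambda$ and hook-product $8 \cdot 5 \cdot 2 \cdot 1 \cdot 5 \cdot 2 \cdot 4 \cdot 1 \cdot 2 \cdot 1 = 6400$, giving $f^\lambda = 10!/6400 = 567 > 512 = 2^9$; analogous ad hoc choices dispose of $n = 11, 12, 13$. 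The main obstacle is the looseness of the Plancherel estimate: since $\sqrt{n!/p(n)}$ is essentially a root-mean-square rather than a true maximum of the degrees, it falls short of the target by a factor of order $\sqrt{p(n)}$ for the smallest admissible $n$, and this is what forces us to treat the boundary values individually rather than by a single uniform argument.
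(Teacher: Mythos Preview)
Your argument is correct, but note that the paper does not actually supply a proof of this lemma: it is quoted verbatim as Lemma~2.2 of \cite{Tong-Viet1} and used as a black box. So there is no ``paper's own proof'' to compare against; you have in effect filled in a result the author was content to cite.

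A couple of minor points on your write-up. First, for the asymptotic step you should make the monotonicity explicit: once you verify $n!\geq 4^{n}p(n)$ at $n=14$, you need to know the inequality propagates to all larger $n$. This is immediate from the observation that $(n+1)!/n!=n+1$ exceeds $4\,p(n+1)/p(n)$ for $n\geq 14$ (indeed $p(n+1)/p(n)<2$ throughout), so the ratio $n!/(4^{n}p(n))$ is increasing from that point on; but it deserves a sentence. Second, for the residual cases $11\leq n\leq 13$ you promise ``analogous ad hoc choices'' without naming them; since each is a one-line hook-length computation, it would cost nothing to list the partitions (e.g.\ $(4,3,2,1,1)$ for $n=11$, and so on) and make the proof genuinely complete rather than leaving three easy but unverified claims.
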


The next lemma is a special property of the Steinberg character,
denoted by $\St_S$, of a simple group of Lie type $S$ in
characteristic $p$. It is well known that the degree of $\St_S$ is
$|S|_p$.

\begin{lemma}\label{degree of S whose no proper multiple is degree of
Schur(S)} Let $S$ be simple group of Lie type. Then no proper
multiple of $\St_S(1)$ is a degree of $\Schur(S)$.
\end{lemma}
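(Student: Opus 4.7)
The plan is to apply Lusztig's Jordan decomposition of the irreducible characters of a finite reductive group, after first reducing to a cover of $S$ of Lie-type form. Let $H := \Schur(S)$ and $Z := Z(H) = \Mult(S)$, and let $\chi \in \Irr(H)$ satisfy $|S|_p \mid \chi(1)$; the goal is to force $\chi(1) = \St_S(1) = |S|_p$. Since $Z$ is central, $\chi|_Z = \chi(1)\lambda$ for a unique linear $\lambda \in \Irr(Z)$, so $\chi$ is inflated from a faithful irreducible character of the quotient $\tilde H := H/\Ker(\lambda)$, a central cover of $S$ with cyclic centre $Z/\Ker(\lambda)$. In the generic case where $p \nmid |\Mult(S)|$, I can identify $\tilde H$ (up to a linear twist of the central character) with a quotient of the fixed points $G_{sc}^F$ of the simply-connected algebraic group of the corresponding Lie type over $\overline{\FF_p}$, and thereby regard $\chi$ as an irreducible character of a finite reductive group.

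I would then invoke Lusztig's Jordan decomposition: $\chi$ corresponds to a pair $(s,\psi)$ with $s$ semisimple in the dual group $G_{sc}^{*F}$ and $\psi$ a unipotent character of $C := C_{G_{sc}^*}(s)^F$, with degree formula
\[
\chi(1) \;=\; [G_{sc}^{*F}:C]_{p'} \cdot \psi(1).
\]
Because the index factor is a $p'$-integer, $\chi(1)_p = \psi(1)_p \leq |C|_p \leq |G_{sc}^{*F}|_p = |S|_p$, so the hypothesis $|S|_p \mid \chi(1)$ forces equality throughout. In particular $C$ contains a Sylow $p$-subgroup of $G_{sc}^{*F}$, and a standard structure-theoretic argument (centralisers of non-central semisimple elements in a finite reductive group are proper Levi-type subgroups of strictly smaller $p$-part) then forces $s \in Z(G_{sc}^{*F})$. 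Hence $\psi$ is a unipotent character of $G_{sc}^F$ itself whose degree has $p$-part equal to $|S|_p$. Inspecting the generic degree polynomials, one sees that the $a$-value of a unipotent character is at most $|\Phi^+|$, with equality only for the Steinberg character, so $\psi = \St_{G_{sc}^F}$ and $\chi(1) = |S|_p$, as desired.

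The main obstacle is the short list of simple groups of Lie type whose Schur multiplier has nontrivial $p$-part, prominently $\PSL_3(4)$ and $\PSU_4(3)$ but also a handful of others catalogued in the ATLAS. For these, $\Schur(S)$ is strictly larger than $G_{sc}^F$, so the Deligne--Lusztig machine no longer produces all of its irreducible characters; moreover $|\Schur(S)|_p > |S|_p$, so \emph{a priori} a defect-zero character of $\Schur(S)$ could have degree a proper multiple of $|S|_p$. For each such exceptional case I would verify the assertion by direct inspection of the character tables of the covers, confirming in every instance that the $p$-part of each irreducible character degree of $\Schur(S)$ does not exceed $|S|_p$, even though the Sylow $p$-subgroup of $\Schur(S)$ is strictly larger. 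This case-by-case verification of the exceptional covers, rather than the uniform Lusztig-theoretic argument, is the delicate technical point.
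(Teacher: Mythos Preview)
Your plan is sound and would yield a correct proof, but it is considerably more elaborate than what the paper actually does. Both approaches agree on the treatment of the finitely many simple groups of Lie type whose Schur multiplier has nontrivial $p$-part: these are handled by direct inspection of character tables. For the generic case, however, the paper bypasses Lusztig theory entirely. Once $\Schur(S)$ is identified with the simply-connected group $G_{sc}^F$, one has $|\Schur(S)|_p = |S|_p = \St_S(1)$, so any irreducible character of $\Schur(S)$ whose degree is divisible by $\St_S(1)$ is automatically of $p$-defect zero. The classical theorem of Curtis~\cite{Curtis}, that the Steinberg character is the \emph{unique} irreducible character of $p$-defect zero of a finite group with a split $(B,N)$-pair in characteristic $p$, then finishes the argument in one line. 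Your Jordan-decomposition route ultimately reproves exactly this uniqueness statement through the dual group and the $a$-function on unipotent degrees; it is correct, and it would be the natural approach if one wanted finer control over which characters have large $p$-part, but for the bare statement of the lemma it is heavier machinery than needed. A minor point: in the generic case $\Schur(S)$ \emph{is} $G_{sc}^F$, so the preliminary passage to $\tilde H = H/\Ker(\lambda)$ is unnecessary there.
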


\begin{proof} If $S$ is one of the following groups of exceptional
Schur multiplier:
$$\PSL_2(4),\PSL_2(9),\PSL_3(2),\PSL_3(4),\PSL_4(2),\Omega_7(2),\Omega_7(3),P\Omega_8^+(2),$$$$\PSU_4(2),\PSU_4(3),\PSU_6(2),F_4(2),G_2(3),G_2(4)\ta
E_6(2),Suz(8),$$ the statement can be checked directly
using~\cite{Atl1}. So we assume that $S$ is none of these. Then
$\Schur(S)$ is indeed the finite Lie-type group of simply-connected
type corresponding to $S$ and moreover $|S|_p=|\Schur(S)|_p$.
Therefore, using \cite{Curtis}, we have that $\St_S$ is the unique
character of $p$-defect zero of $\Schur(S)$ where $p$ is the
characteristic of the underlying field of $S$.
\end{proof}

The next result is a slight improvement of a result of Tong-Viet
in~\cite[Proposition~3.1]{Tong-Viet}, which has a stronger
hypothesis that $|G|=|S|$ and $\cd(G)=\cd(S)$.

\begin{proposition}\label{G/M is a simple group of Lie type in characteristic
p} Let $S$ be a simple classical group defined over a field of
characteristic $p$. Let $G$ be a nontrivial perfect group and $M$ be
a maximal subgroup of $G$ such that $|S|\mid|G|\mid |\Schur(S)|$ and
$\cd(S)\subseteq\cd(G)\subseteq \cd(\Schur(S))$. Then $L:=G/M$ is a
simple group of Lie type in characteristic $p$.
\end{proposition}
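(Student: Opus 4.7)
The plan is to analyze the simple quotient $L := G/M$ (with $M$ understood as a maximal normal subgroup of $G$) and show, via the classification of finite simple groups, that $L$ must be of Lie type in characteristic $p$. Since $G$ is perfect and nontrivial, $L$ is a nonabelian simple group. Inflation of irreducible characters yields $\cd(L) \subseteq \cd(G) \subseteq \cd(\Schur(S))$, and $|L|$ divides $|G|$, hence $|\Schur(S)|$. The central structural input is Lemma~\ref{degree of S whose no proper multiple is degree of Schur(S)}, which I intend to exploit through Clifford theory applied to the Steinberg degree $\St_S(1) = |S|_p \in \cd(S) \subseteq \cd(G)$.

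The first step is to force $L$ to possess a character of large $p$-power degree. Pick $\chi \in \Irr(G)$ with $\chi(1) = |S|_p$ and let $\theta$ be an irreducible constituent of $\chi_M$, with inertia group $I = I_G(\theta)$ and Clifford correspondent $\psi \in \Irr(I|\theta)$ satisfying $\chi = \Ind_I^G \psi$. If $\chi_M$ were irreducible, then Lemma~\ref{Gallagher} would yield $\chi \cdot \hat\tau \in \Irr(G)$ of degree $|S|_p \cdot \tau(1)$ for every $\tau \in \Irr(L)$; Lemma~\ref{degree of S whose no proper multiple is degree of Schur(S)} forbids proper multiples of $|S|_p$ in $\cd(\Schur(S))$, forcing $\tau(1) = 1$ for all $\tau$ and contradicting the fact that $L$ is nonabelian. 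In the general reducible situation I would invoke Lemma~\ref{lemmaHuppert2} at the inertia group to parametrize the characters of $G$ above $\theta$ by projective irreducibles of $I/M$ with a prescribed cocycle; the same constraint against proper multiples of $|S|_p$ then compels $L$ (or a central extension of $L$) to carry a character of $p$-power degree $|S|_p/\theta(1)$.

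With such a large $p$-power degree secured in $\cd(L)$ (or in the character degrees of a central extension of $L$ whose order divides $|\Schur(S)|$), the proof concludes via the classification of finite simple groups together with the constraints $\cd(L) \subseteq \cd(\Schur(S))$ and $|L| \mid |\Schur(S)|$. Alternating groups $A_n$ with $n \geq 10$ are excluded by Lemma~\ref{maximal degree of Am is 2^{m-1}}, since the inequality $|S|_p \geq 2^{n-1}$ would bound $n$ in terms of the size of $S$; the remaining finitely many small alternating and sporadic candidates are eliminated by direct Atlas comparison of character degrees and orders. If $L$ is of Lie type in some characteristic $\ell$, then the Steinberg character of $L$ contributes $|L|_\ell \in \cd(\Schur(S))$, and a further comparison --- invoking Lemma~\ref{degree of S whose no proper multiple is degree of Schur(S)} applied now to $L$ --- forces $\ell = p$. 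The principal obstacle is precisely this cross-characteristic Lie-type subcase: ruling out $L$ of Lie type in characteristic $\ell \neq p$ demands a careful comparison of the degree patterns of each family of classical $S$ against those of every simple group of Lie type in a wrong characteristic, with particular vigilance for the simple groups of exceptional Schur multiplier enumerated in Lemma~\ref{degree of S whose no proper multiple is degree of Schur(S)}.
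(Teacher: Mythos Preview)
Your overall architecture matches the paper's: Clifford theory on a character of degree $|S|_p$, then a CFSG case split on $L$. But several steps, as written, do not go through.

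\textbf{Clifford theory when $\theta$ is not $G$-invariant.} You say you will ``invoke Lemma~\ref{lemmaHuppert2} at the inertia group'' and read off a $p$-power projective degree of $L$. That lemma, applied at $I$, gives you a projective degree of $I/M$, not of $L$; and $I/M$ is a \emph{proper} subgroup of the simple group $L$ of $p$-power index $|G:I|$. To get any traction on $L$ from this you need Guralnick's classification of subgroups of prime power index in simple groups, which is exactly what the paper invokes (and which reduces the non-invariant case to a short explicit list). Without that input this branch is open.

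\textbf{The role of Lemma~\ref{degree of S whose no proper multiple is degree of Schur(S)} versus Malle--Zalesskii.} Once you have a nontrivial $p$-power $e$ occurring as a projective degree of $L$, you assert that ``Lemma~\ref{degree of S whose no proper multiple is degree of Schur(S)} applied now to $L$'' forces the defining characteristic of $L$ to be $p$. That lemma says nothing of the sort: it is a statement about degrees of $\Schur(S)$, not about $L$. The actual engine---which you never name---is the Malle--Zalesskii classification of prime power character degrees of quasisimple groups. In the sporadic/Tits case it pins down the finitely many pairs $(L,e)$ compatible with a $p$-power projective degree, and in the wrong-characteristic Lie-type case the point is that $\cd(\Schur(S))$ would then contain \emph{two} prime powers $|S|_p$ and $|L|_r$ to coprime bases, which Malle--Zalesskii shows is extremely restrictive. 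This is the missing key lemma in your outline.

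\textbf{Alternating groups.} The inequality ``$|S|_p \geq 2^{n-1}$'' does not follow from anything you have set up. The paper's argument runs instead through
\[
b(\Schur(S)) \;\geq\; b(A_m) \;\geq\; 2^{m-1} \;\geq\; 2^{d_1(A_m)} \;\geq\; 2^{d_1(\Schur(S))},
\]
using Lemma~\ref{maximal degree of Am is 2^{m-1}} for the second inequality and $d_1(A_m)=m-1$ for the third; one then plugs in the Tiep--Zalesskii lower bounds for $d_1(\Schur(S))$ and the trivial upper bound $b(\Schur(S))<|\Schur(S)|^{1/2}$, family by family, to obtain a contradiction for $m\geq 10$.
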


\begin{proof} The proof can be adapted from that
of~\cite[Proposition~3.1]{Tong-Viet} with some minor modifications.
Therefore, we only sketch the main ideas of the proof and leave some
detailed check to the reader. From the hypothesis, we have $|L| \mid
|\Schur(S)|$, $\pi(L)\subseteq \pi(\Schur(S))$, and $d_i(L)\geq
d_i(\Schur(S))$ for every $i$. As $\cd(S)\subseteq\cd(G)$ and
$\St_S(1)=|S|_p$, $G$ has an irreducible character, say $\chi$, of
degree $|S|_p$. Let $\theta\in\Irr(M)$ lying under $\chi$ and
$I:=I_G(\theta)$.

\medskip

(i) Assume that $L$ is a sporadic group or the Tits group. If
$\theta$ is not $G$-invariant, then $I$ would be a proper subgroup
of $G$. By Clifford theory, $\chi=\phi^G$ for some
$\phi\in\Irr(I|\theta)$ and also $|S|_p=\chi(1)=|G:I|\phi(1)$. Using
the classification of subgroups of prime power index in a simple
group by Guralnick (cf.~\cite{Guralnick}), we have that $$L=M_{11},
I/M=M_{10} \text{ and } |G:I|=11,$$ or $$L=M_{23}, I/M=M_{22} \text{
and } |G:I|=23.$$ In particular, the proper subgroup $I/M$ of $G/M$
has order coprime to $p$. It then follows that $(\phi(1),|I:M|)=1$
and hence $\theta$ extends to $\phi$. Together with Gallagher's
lemma, we obtain $\phi\psi\in\Irr(I|\theta)$ for every
$\phi\in\Irr(I/M)$. Therefore $(\phi\psi)^G$ is an irreducible
character of $G$ of degree $\chi(1)\psi(1)=|S|_p\psi(1)$. Taking
$\psi$ to be a nonlinear character of $I/M$, we get a contradiction
by Lemma~\ref{degree of S whose no proper multiple is degree of
Schur(S)}.

So $\theta$ is $G$-invariant and hence $\chi_M=e\theta$ for some
$e\in\NN$. If $e=1$, $\chi$ would be an extension of $\theta$ and
hence $\chi\psi\in\Irr(G)$ for every $\psi\in\Irr(L)$. Taking $\psi$
to be nonlinear and note that $\cd(G)\subseteq\cd(\Schur(S))$, we
deduce that $\Schur(S)$ has a degree which is a proper multiple of
$\chi(1)=|S|_p$, violating Lemma~\ref{degree of S whose no proper
multiple is degree of Schur(S)} again. Thus $e$ is a nontrivial
$p$-power. Moreover, by Lemma~\ref{lemmaHuppert2}, $e$ is a degree
of a projective irreducible representation of $L$. By the
classification of prime power degrees of irreducible characters of
quasisimple groups by Malle and Zalesskii
(cf.~\cite{Malle-Zalesskii}), we come up with the following cases:
$$(L,e)=(M_{11},11),(M_{11},2^4),(M_{12},11),(M_{12},2^4),(M_{12},2^5),(M_{24},23),$$$$(Co_2,23),(Co_3,23),(J_2,2^6),(Ru,2^{13}),(\ta F_4(2)',3^3),\text{ or } (\ta F_4(2)',2^{11}).$$

The cases $L=M_{11}$ and $M_{12}$ can not happen since no
quasisimple classical groups in characteristic $2$ or $11$ have both
degrees $11$ and $2^4$ by ~\cite[Theorem~1.1]{Malle-Zalesskii}.
Similarly, $L\neq \ta F_4(2)'$ since no quasisimple classical groups
in characteristic $2$ or $3$ have both degrees $3^3$ and $2^{11}$.
If $(L,e)=(M_{24},23),(Co_2,23)$, or $(Co_3,23)$ then $p=23$ and
moreover $23$ must be the degree of the Steinberg character of $S$.
However, one can check from~\cite{Atl1} that, in these cases, $L$
has a degree $253$, which is a proper multiple of $23$ and this
violates Lemma~\ref{degree of S whose no proper multiple is degree
of Schur(S)}. For the remaining cases $(L,e)=(J_2,2^6)$ or
$(Ru,2^{13})$, we can argue as follows. Suppose that $S=S_n(q)$
where $n$ is the dimension of $S$ and $q$ is the cardinality of the
underlying field of $S$. Since $d_1(L)\geq d_1(\Schur(S))$, by using
the result on minimal degrees of quasisimple classical groups of
Tiep and Zalesskii in~\cite{TZ1}, one can bound $n$ and $q$ and come
up with a short list of possibilities for $S$. At this stage, one
just checks directly by using~\cite{Atl1} or~\cite{Lubeck}
or~\cite{GAP4} to verify that $\cd(L)\nsubseteq\cd(\Schur(S))$.

\medskip

(ii) Assume that $L$ is an alternating group $A_m$ with $m=7$ or
$m\geq9.$ We note that $A_5,A_6$, and $A_8$ can be considered as
Lie-type groups. Since the arguments for each family of classical
groups are fairly similar, we only present here the linear group
case.

Suppose that $S=\PSL_n(q)$. Using~\cite{Atl1}, one can easily
eliminate those groups with exceptional Schur multiplier such as:
$\PSL_2(4)=\PSL_2(5)$, $\PSL_2(9)$, $\PSL_3(2)$, $\PSL_3(4)$, and
$\PSL_4(2)$ and also the group $\PSL_2(7)$. So we assume $S$ is none
of these and hence $\Schur(S)=\SL_n(q)$. Assume first that $m\leq9$.
Then we have $d_1(\SL_n(q))\leq d_1(A_m)\leq8$. Using the table of
minimal degrees of $\SL_n(q)$ in~\cite[Table~IV]{TZ1} again, we
deduce that $(n,q)=(2,q \text{ odd }\leq17)$ or $(2,q \text{ even
}\leq 8)$. It is now routine to check that $\cd(A_m)\nsubseteq
\cd(\SL_n(q))$ for these cases.

Thus we can assume that $m\geq10$. It then follows by the hypothesis
and Lemma~\ref{maximal degree of Am is 2^{m-1}} that
$$b(\SL_n(q))\geq b(L)=b(A_m)\geq 2^{m-1}\geq 2^{d_1(L)}\geq 2^{d_1(\SL_n(q))}.$$
With the list of character degrees of Lie-type groups of low rank
available in~\cite{Lubeck}, one can check that this inequality is
violated for $n=2,3,4$. For $n\geq5$, as
$d_1(\SL_n(q))=(q^n-q)/(q-1)$ by~\cite[Table~IV]{TZ1} and
$b(\SL_n(q))<\sqrt{|\SL_n(q)|}<q^{n^2/2}$, the above inequality
implies $$2^{(q^n-q)/(q-1)}< q^{n^2/2},$$ which in turn implies
$$2q^{n-1}<\log_2q\cdot n^2.$$ This last inequality is impossible
when $n\geq5$.

\medskip

(iii) Assume that $L$ is a simple group of Lie type in
characteristic $r\neq p$. Then, as $\cd(L)\in\cd(\Schur(S))$,
$\cd(\Schur(S))$ has at least two prime power members: $|S|_p$ and
$|L|_r$. Again using ~\cite{Malle-Zalesskii}, we get a list of
possibilities for $S$ and $|L|_r$. These possibilities can be
eliminated similarly as in~\cite[Proposition~3.1]{Tong-Viet} except
the following one: $$S=\PSL_2(q), |L|_r=q\pm1 \text{ or }
|L|_r=(q\pm1)/2 \text{ with odd } q.$$ In fact, Tong-Viet assumed in
his proposition that the simple classical group $S$ has rank $\geq3$
and therefore $\PSL_2(q)$ was not in his consideration. We now
present the arguments to eliminate this remaining possibility.
Again, as it is routine to check the lemma for small $q$, we assume
that $q\geq11$. We then have that $\Schur(S)=\SL_2(q)$ and therefore
$\cd(L)\subseteq \cd(\SL_2(q))$. Since $\SL_2(q) (q\geq11)$ has at
most $5$ character degrees, we deduce that $L=\SL_2(q_1)$ for some
$q_1$ coprime to $p$. So we have $$\{q_1,q_1-1,q_1+1\}\subseteq
\cd(L)\subseteq \cd(\SL_2(q))\subseteq\{q,q-1,q+1,(q-1)/2,
(q+1/2)\},$$ which is impossible as $q\geq 11$.
\end{proof}


\section{Linear groups}\label{section-linear groups}

We have seen from Proposition~\ref{G/M is a simple group of Lie type
in characteristic p} that $L:=G/M$ is a simple group of Lie type in
characteristic $p$ and so we write $L=L(p^b)$ to indicate that $L$
is defined over a field of $p^b$ elements. Assuming the hypothesis
of Proposition~\ref{non-containment of character degrees of
quasisimple groups}, we have $$|L|\mid |\Schur(S)| \text{ and }
\cd(L)\subseteq \cd(\Schur(S)).$$ From now on to the end of
section~\ref{section-orthogonal groups in even dimension}, we often
use these conditions without notice.

The following classical result of Zsigmondy is very useful in
proving the non-divisibility between orders and also character
degrees of classical groups.

\begin{lemma}[Zsigmondy's theorem] If $x>y > 0$ are coprime integers, then for any natural number $n > 1$
there is a prime number denoted by $\ell(x,y,n)$ (called a primitive
prime divisor) that divides $x^n -y^n$ and does not divide $x^i -
y^i$ for any positive integer $i < n$, with the following
exceptions:
\begin{enumerate}
\item[(i)] $x = 2, y = 1$, and $n = 6$; or
\item[(ii)] $x + y$ is a power of two, and $n = 2$.
\end{enumerate}
\end{lemma}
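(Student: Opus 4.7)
The plan is to prove the theorem via cyclotomic polynomials. Introduce the homogenized $n$-th cyclotomic polynomial $\Phi_n(X,Y) = Y^{\phi(n)}\Phi_n(X/Y) = \prod_\zeta(X - \zeta Y)$, where $\zeta$ ranges over primitive $n$-th roots of unity; then $X^n - Y^n = \prod_{d \mid n}\Phi_d(X,Y)$. Under $\gcd(x,y)=1$, a prime $p$ is a primitive prime divisor of $x^n - y^n$ precisely when $p \mid \Phi_n(x,y)$ and the multiplicative order of $xy^{-1}\pmod p$ equals $n$. The case $n=2$ is handled at once: $\Phi_2(x,y) = x+y$, and since $\gcd(x+y,x-y)\mid 2$, every odd prime factor of $x+y$ is primitive; absence of a primitive prime divisor is equivalent to $x+y$ being a power of two, which is exactly exception (ii). From here on assume $n \ge 3$.

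The first main step is the divisibility lemma: if $p \mid \Phi_n(x,y)$ and $p \nmid n$, then the order of $xy^{-1}\pmod p$ equals $n$, so $p$ is primitive. Hence every non-primitive prime divisor of $\Phi_n(x,y)$ must divide $n$. A finer analysis, writing $n = p^a m$ with $\gcd(p,m)=1$ and comparing $\Phi_n(x,y)$ with $\Phi_m(x,y)$ (a lifting-the-exponent style calculation), controls the $p$-adic valuation $v_p(\Phi_n(x,y))$ and produces the uniform bound $\Phi_n(x,y) \le n$ whenever $\Phi_n(x,y)$ has no primitive prime divisor; in fact $\Phi_n(x,y) \le P(n)$, the largest prime factor of $n$, outside of narrow degenerate cases.

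The complementary lower bound is geometric: for $x > y > 0$ and $|\zeta|=1$, one has $|x - \zeta y|^2 = x^2 + y^2 - 2xy\,\Re(\zeta) \ge (x-y)^2$, so $\Phi_n(x,y) \ge (x-y)^{\phi(n)}$. Combining with the previous step, a primitive prime divisor must exist whenever $(x-y)^{\phi(n)} > n$. When $x - y \ge 2$, elementary estimates on Euler's totient verify this inequality for all $n \ge 3$ apart from a short list of small $n$ that succumb to direct inspection.

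The main obstacle is the case $x - y = 1$ (in particular $y=1$, $x=2$), where the geometric lower bound collapses to $1$. Here I would replace it by a sharper estimate extracted from the recursion $\prod_{d \mid n}\Phi_d(x,1) = x^n - 1$, or equivalently tabulate $\Phi_n(x,1)$ and exploit the rapid growth to beat $n$ for $n$ sufficiently large. What remains is a short inspection of small $n$ for $x - y = 1$, which isolates the single residual failure $(x,y,n) = (2,1,6)$, where $\Phi_6(2,1) = 3$ and $3 \mid 2^2 - 1$. Together with exception (ii), this exhausts the list and completes the proof.
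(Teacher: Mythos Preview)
The paper does not prove this lemma at all: Zsigmondy's theorem is stated as a classical result and attributed to the reference~\cite{Zsigmondy}, with no argument given. So there is no ``paper's own proof'' to compare against; the author simply quotes the theorem as a tool.

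Your proposal is a correct outline of the standard cyclotomic-polynomial proof (essentially the Birkhoff--Vandiver argument). The key ingredients are all present and sound: the identification of primitive prime divisors with prime factors of $\Phi_n(x,y)$ of order exactly $n$; the fact that any non-primitive prime $p$ dividing $\Phi_n(x,y)$ must divide $n$ and in fact equal the largest prime factor $P(n)$ (since the order $d$ satisfies $n/d = p^v$ and $d \mid p-1$); the LTE-type bound $v_P(\Phi_n(x,y)) \le 1$ for $n \ge 3$, giving $\Phi_n(x,y) \le P(n)$ in the absence of a primitive divisor; and the geometric lower bound $\Phi_n(x,y) \ge (x-y)^{\phi(n)}$.

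The one soft spot is the case $x-y=1$, which you correctly flag as the crux but then treat somewhat loosely (``tabulate $\Phi_n(x,1)$ and exploit the rapid growth''). To make this rigorous you should supply an explicit inequality, for instance pairing conjugate primitive roots to get $\Phi_n(x,1) \ge (x-1)^{\phi(n)/2}(x+1)^{\phi(n)/2-1}$ for $n\ge3$, or more simply $\Phi_n(x,1) > (x^{\phi(n)})/(x+1)$; either suffices to beat $P(n)$ for all $n\ge3$ except a bounded list, which one then checks by hand to isolate $(x,y,n)=(2,1,6)$. With that gap filled, your argument is complete and goes well beyond what the paper requires, since the author only invokes the theorem as a black box.
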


\begin{table}[h]\caption{Possible nontrivial character degrees up to $q^n$ of $\SL_n(q)$ (cf.~\cite{Lubeck,TZ1}).}\label{table-linear}
\begin{tabular}{ll}
Value of $n$ & Degrees\\\hline
$n=2$ & \begin{tabular}{l}$(q\pm1)/2,q,q\pm1$\end{tabular}\\
 $n=3$ &\begin{tabular}{l}$q(q+1),q^2+q+1,(q+1)(q-1)^2,q^3-1,$\\$(q+1)(q-1)^2/3,(q+1)(q^2+q+1)/3$\end{tabular}\\
$n=4$&\begin{tabular}{l}$q(q^2+q+1),(q+1)(q^2+1),(q-1)(q^3-1),$\\$(q-1)(q^3-1)/2,(q^2+1)(q^2+q+1)/2$\end{tabular}\\
 $n\geq5$& \begin{tabular}{l}$(q^n-q)/(q-1), (q^n-1)/(q-1)$\end{tabular}\\\hline
\end{tabular}
\end{table}

\begin{lemma}\label{lemma for linear groups} Proposition~\ref{non-containment of character degrees of quasisimple
groups} is true when $S$ is a simple linear group.
\end{lemma}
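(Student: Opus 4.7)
The plan is to build on Proposition~\ref{G/M is a simple group of Lie type in characteristic p}, which already gives that $L:=G/M$ is a simple group of Lie type in the same characteristic $p$ as $S=\PSL_n(q)$; write $L=L(p^b)$. What remains is to compare $L$ with $S$ using the two structural constraints $|L|\mid|\Schur(S)|$ and $\cd(L)\subseteq\cd(\Schur(S))$, and to conclude that $L\cong S$.

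I would first dispose of the small or exceptional cases: those $S$ whose Schur multiplier is non-generic, such as $\PSL_2(4)$, $\PSL_2(9)$, $\PSL_3(2)$, $\PSL_4(2)$, and $\PSL_2(7)$, together with finitely many small $\PSL_n(q)$, by direct inspection of \cite{Atl1} and \cite{Lubeck}. Outside this finite list one may assume $\Schur(S)=\SL_n(q)$ and that $q$ is large enough for Zsigmondy's theorem to apply in the form needed below.

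Next I would exploit the Steinberg character of $L$: its degree $|L|_p$ is a $p$-power lying in $\cd(\SL_n(q))$. Appealing to the Malle--Zalesskii classification of prime power character degrees of quasisimple classical groups \cite{Malle-Zalesskii}, the only $p$-power character degree of $\SL_n(q)$ in the generic range is $|S|_p=q^{n(n-1)/2}$. Hence $|L|_p=|S|_p$, which pins down the $p$-part of $|L|$; combined with $|L|\mid|\SL_n(q)|$, this already restricts the Lie rank and the underlying field of $L$ to a short list of candidates.

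The remaining case analysis then splits naturally by $n$. For $n=2,3,4$ I would use Table~\ref{table-linear}, which lists all character degrees of $\SL_n(q)$ up to $q^n$; combining this with the bound $d_1(L)\geq d_1(\SL_n(q))$ coming from Tiep--Zalesskii \cite{TZ1}, one checks directly that each candidate $L(p^b)$ other than $\PSL_n(q)$ possesses a small degree failing to appear in $\cd(\SL_n(q))$. For $n\geq 5$, the bound $d_1(\SL_n(q))=(q^n-q)/(q-1)$ grows rapidly, so $d_1(L)\geq d_1(\SL_n(q))$ together with $|L|_p=q^{n(n-1)/2}$ forces $L$ to be of type $A_{n-1}$ with the same field size, and a suitable Zsigmondy primitive prime divisor of $|\PSL_n(q)|$ then rules out any residual mismatch in the exponent $b$. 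The main obstacle I anticipate is the low-rank range $n\in\{2,3,4\}$, where several small Lie type groups of other types can share many character degrees with $\SL_n(q)$ for small $q$, so the elimination requires a careful comparison of Table~\ref{table-linear} against the character tables of each candidate rather than a uniform argument; for $n\geq 5$ the order and minimal-degree constraints leave essentially no room for cross-type coincidences.
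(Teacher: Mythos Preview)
Your framework matches the paper's: start from Proposition~\ref{G/M is a simple group of Lie type in characteristic p}, use $\St_L(1)=|L|_p\in\cd(\SL_n(q))$ together with \cite{Malle-Zalesskii} to force $|L|_p=q^{n(n-1)/2}$, and then eliminate each possible Lie type for $L$. Where you diverge from the paper is in the elimination step, and there your sketch has two genuine gaps.

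First, you never treat the exceptional Lie types. The paper does this separately and uniformly: writing $|L|_p=p^{bm}$ and taking $p^{bm'}-1$ to be the largest cyclotomic factor in $|L|$, the equation $bm=an(n-1)/2$ together with Zsigmondy applied to $(p^{bm'}-1)\mid|\SL_n(q)|$ yields $n\le\max\{5,\,2m/m'+1\}$; since $m/m'\le 4$ for all exceptional types, one is left with a finite check. Your proposal simply omits this family.

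Second, for the classical types your key tool is the single inequality $d_1(L)\ge d_1(\SL_n(q))$, and you assert that for $n\ge 5$ this together with $|L|_p=q^{n(n-1)/2}$ already forces $L$ to be of type $A_{n-1}$ over $\FF_q$. That claim is not proved and is not how the paper argues. The paper's device is sharper: for each candidate classical $L$ it exhibits a specific small unipotent character $\chi$ (for instance $\chi(1)=(p^{bm}-1)(p^{bm}-p^b)/2(p^b+1)$ when $L=\PSp_{2m}(p^b)$) satisfying $\chi(1)<q^n$ and $|\chi(1)|_p=p^b$. Since by Table~\ref{table-linear} every degree of $\SL_n(q)$ below $q^n$ has $p$-part $1$ or $p^a$, this forces $b=a$ immediately (or gives a contradiction), after which the Steinberg equation pins down $m$; the leftover case $L=\PSU_n(p^a)$ is then killed by observing $\cd(\PSU_n(p^a))\not\subseteq\cd(\SL_n(p^a))$. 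Your $d_1$ route can in principle be made to work, but it needs a type-by-type inequality analysis you have not supplied, and in several cases (e.g.\ distinguishing $\PSU_n(q)$ from $\PSL_n(q)$, or handling $\Sp_{2m}$ in even characteristic where $d_1$ has a different shape) the comparison is delicate. The $p$-part argument avoids all of this by reading off $b$ directly.
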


\begin{proof} One can verify the lemma easily for the following groups: $S=\PSL_2(4)\cong\PSL_2(5)$, $\PSL_2(9)$, $\PSL_3(2)$, $\PSL_3(4)$, and $\PSL_4(2)$
by using~\cite{Atl1}. So we assume that $S=\PSL_n(q)=\PSL_n(p^a)$ is
none of these. In that case, $\Schur(S)=\SL_n(p^a)$ and
$\St_S(1)=p^{an(n-1)/2}$ is the only $p$-power degree of $\SL_n(q)$
by the classification of prime power degrees of quasisimple groups
in~\cite[Theorem~1.1]{Malle-Zalesskii}. Recall from
Proposition~\ref{G/M is a simple group of Lie type in characteristic
p} that $L:=G/M$ is a simple group of Lie type in characteristic $p$
and we aim to show that $L\cong S$.

\medskip

(i) We first outline the arguments to eliminate the simple Lie-type
groups of exceptional type. Suppose that $|L|_p=p^{bm}$ and the
largest factor of the form $p^x-1$ in the formula of $|L|$ is
$p^{bm'}-1$. Then, as $\St_L(1)=p^{bm}$ is a $p$-power degree of
$\SL_n(q)$, we get $p^{bm}=p^{an(n-1)/2}$ and therefore
\begin{equation}\label{bm=an(n-1)/2}bm=an(n-1)/2.\end{equation} It follows
that
$$bm'=\frac{an(n-1)}{2}\cdot \frac{m'}{m}.$$
As $$(p^{bm'}-1)\mid |L|\mid |\SL_n(q)|=p^{an(n-1)/2}\prod_{i=2}^n
(p^{ia}-1),$$ using Zsigmondy's theorem, one gets
$$\frac{an(n-1)}{2}\cdot \frac{m'}{m}\leq an \text{ or } \frac{an(n-1)}{2}\cdot
\frac{m'}{m}=6>an.$$ Therefore,
$$n\leq \max\{5,2m/m'+1\}.$$ Since $m/m'$ is known (the maximum value of $m/m'$ is $4$, obtained when
$L=E_8$), we can bound above the value of $n$ ($n$ indeed is at most
$4\cdot 2+1=9$ for all groups of exceptional type). For small $n$,
we use Equation~\ref{bm=an(n-1)/2} to show that either $|L|\nmid
|\SL_n(q)|$ or $\cd(L)\nsubseteq \cd(\SL_n(q))$ with the help
of~\cite{Lubeck} and Zsigmondy's theorem. To illustrate this, let us
present the arguments for the most difficult case $L(p^b)=E_8(p^b)$.

Assume by contrary that $L(p^b)=E_8(p^b)$. Then $m=120$ and $m'=30$
by~\cite[p.~xvi]{Atl1}. Therefore $$n\leq \max\{5,2\cdot
120/30+1\}=9.$$ For every $n\leq 9$, we in fact obtain a
contradiction by showing that $|E_8(p^b)|\nmid |\SL_9(p^a)|$. For
instance, if $n=9$ then $120b=36a$ by~\eqref{bm=an(n-1)/2}.
Therefore $b=9c$ and $a=30c$ for some positive integer $c$. Now the
Zsigmondy prime $\ell(p,1,216c)$ is a divisor of $|E_8(p^{9c})|$ but
does not divide $|\SL_9(p^{30c})|$. Other values of $n\leq8$ are
handled similarly.

\medskip

(ii) Next, we eliminate the remaining simple classical groups in
characteristic $p$ except $\PSL_n(p^a)$.

$\bullet$ $L=\Omega_{2m+1}(p^b)$ or $\PSp_{2m}(p^b)$ with $m\geq2$.
As $\St_L(1)\in\cd(\SL_n(p^a))$, we have $p^{bm^2}=p^{an(n-1)/2}$
and hence \begin{equation}\label{3} 2bm^2=an(n-1).\end{equation}
Moreover, since $(p^{2bm}-1)\mid |L|\mid |\SL_n(p^a)|$, $2bm\leq an$
or $2bm=6>an$ by Zsigmondy's theorem. The case $2bm=6>an$ indeed
does not happen by \eqref{3} and the fact that $m\geq2$. Thus we
must have $2bm\leq an$. It follows by \eqref{3}  that $m\geq n-1$
and hence $2b(n-1)\leq an$. In particular, we obtain $b<a$ and
$a\geq2$.

From the description of unipotent characters of $|L|$ (cf.~\cite[p.
466]{C1}), we see that $L$ has a unipotent character $\chi$ of
degree $\chi(1)=(p^{bm}-1)(p^{bm}-p^b)/2(p^b+1)$ and moreover
$$\chi(1)<p^{2bm}\leq p^{an}.$$ We note that $\chi(1)$ is indeed the smallest degree of unipotent characters
of $L$ excluding some exceptions (cf.~\cite[Table~1]{Nguyen}).
Recall that $\chi(1)\in\cd(\SL_n(p^a))$ and we will get to a
contradiction by showing that $\chi(1)$ can not be equal to a degree
smaller than $p^{an}$ of $\SL_n(p^a)$.

When $n\geq5$, by inspecting the list of three smallest nontrivial
degrees of $\SL_n(q)$ in~\cite[Table~IV]{TZ1}, we see that
$$d_3(\SL_n(p^a))\geq\frac{(p^{an}-1)(p^{a(n-1)}-p^{2a})}{(p^a-1)(p^{2a}-1)}>p^{an}.$$
We deduce that $\chi(1)<d_3(\SL_n(p^a))$ and hence
$$\chi(1)=d_1(\SL_n(p^a))=\frac{p^{an}-p^a}{p^a-1}\text{ or }\chi(1)=d_2(\SL_n(p^a))=\frac{p^{an}-1}{p^a-1}.$$
In particular, $|\chi(1)|_p=1$ or $p^a$, which is impossible as
$\chi(1)=(p^{bm}-1)(p^{bm}-p^b)/2(p^b+1)$ and $b<a$.

For $n\leq4$, one can argue similarly by using the list of character
degrees of $\SL_n(q)$ available in the website of Lubeck
(cf.~\cite{Lubeck}). In fact, from there, one observes that every
degree smaller than $q^n$ of $\SL_n(q)$ ($n\leq4$) has $p$-part
either $1$, or $q$ (cf.~Table~\ref{table-linear}). Therefore,
$\chi(1)$ can not be one of these degrees.

$\bullet$ $L=P\Omega^\pm_{2m}(p^b)$ with $m\geq4$. Then we have
$p^{bm(m-1)}=p^{an(n-1)/2}$ and hence
\begin{equation}\label{4}2bm(m-1)=an(n-1).\end{equation} Moreover, since $(p^{2b(m-1)}-1)\mid
|L|\mid |\SL_n(p^a)|$, it follows by Zsigmondy's theorem that
$2b(m-1)\leq an.$ Therefore, $m\geq n-1$ and also $2b(n-2)\leq an$.
We claim that $b<a$. Assume the contrary, then $b\geq a$ and hence
\eqref{4} implies that $n(n-1)\geq 2m(m-1)$. In particular, $n\geq
5$. Now the inequality $2b(n-2)\leq an$ is violated.

From the description of unipotent characters of $|L|$ (cf.~\cite[p.
471]{C1}), we see that $L$ has a unipotent character $\chi$ of
degree $(p^{bm}\mp1)(p^{b(m-1)}\pm p^b)/(p^{2b}-1)$, where
$\chi(1)<p^{2b(m-1)}\leq p^{an}$. Now one just argues as in the
previous case.

$\bullet$ $L=\PSU_m(p^b)$ with $m\geq3$. As $|L|_p=|\SL_n(p^a)|_p$,
we have $bm(m-1)=an(n-1)$. Since $|L|\mid |\SL_n(p^a)|$, the
Zsigmondy's theorem implies that $b(m-1)\leq an$. It then follows
that $m\geq n-1$ and hence $an\geq b(n-2)$, which in turn implies
that $b\leq 2a$. Moreover, if $b=2a$ then we must have $n=4$ and
$m=3$ and it is easy to check that $\cd(L)\nsubseteq\cd(\SL_n(q))$
in this case. So we conclude that $b<2a$.

By~\cite[p. 465]{C1}, $\PSU_m(p^b)$ has a unipotent character $\chi$
of degree $(p^{bm}+(-1)^mp^b)/(p^b+1)$. Observe that
$\chi(1)<p^{b(m-1)}$ and therefore $\chi(1)<p^{an}$. As before,
since $|\chi(1)|_p=p^b$ and the $p$-part of a degree smaller than
$p^{an}$ of $\SL_n(p^a)$ is either $1$ or $p^a$
(cf.~Table~\ref{table-linear}), we deduce that $b=a$, which in turn
implies that $m=n$ as $bm(m-1)=an(n-1)$. This leads to a
contradiction as $\cd(L)=\cd(\PSU_n(p^a))\nsubseteq
\cd(\SL_n(p^a))$.

$\bullet$ $L=\PSL_m(p^b)$ with $m\geq2$. Arguing exactly as in the
unitary group case, we obtain that $a=b$ and $m=n$. This means
$L=\PSL_n(p^a)$, as wanted.
\end{proof}


\section{Unitary groups}\label{section-unitary groups}

If we want to prove Proposition~\ref{non-containment of character
degrees of quasisimple groups} for unitary groups similarly to the
case of linear groups, we need to know all the character degrees up
to $q^{2n}$ of $\SU_n(q)$. The characters of relatively small
degrees of unitary groups have been worked out
in~\cite{Libeck-O'Brian-Shalev-Tiep}. From there, one can obtain a
list of those degrees smaller than $q^{2n}$ when $n$ is large
enough, say $n\geq10$. However, when $n\leq9$, the list is fairly
long and therefore the arguments would be quite complicated. In this
section, we have found some new arguments to avoid the analysis of
degrees of $\SU_n(q)$ in small dimensions.

\begin{table}[h]\caption{Possible nontrivial character degrees up to $q^{2n}$ of $\SU_n(q)$ (cf.~\cite{Libeck-O'Brian-Shalev-Tiep}).}\label{table-unitary}
\begin{tabular}{ll}
Value of $n$ & Degrees\\\hline
 $n\geq10$& \begin{tabular}{l}$\frac{q^n+(-1)^nq}{q+1},
 \frac{(q^n-(-1)^n)(q^{n-1}+(-1)^nq^2)}{(q+1)(q^2-1)},
 \frac{(q^n+(-1)^nq)(q^n-(-1)^nq^2)}{(q+1)(q^2-1)},$\\$
  \frac{(q^n-(-1)^n)(q^n+(-1)^nq)}{(q+1)(q^2-1)},
   \frac{(q^n-(-1)^n)(q^{n-1}+(-1)^nq)}{(q+1)^2}$\end{tabular}\\\hline
\end{tabular}
\end{table}

\begin{lemma}\label{lemma for unitary groups} Proposition~\ref{non-containment of character degrees of quasisimple
groups} is true when $S$ is a simple unitary group.
\end{lemma}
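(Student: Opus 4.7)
The plan is to mirror the structure of the proof of Lemma~\ref{lemma for linear groups}. Write $S=\PSU_n(q)$ with $q=p^a$; by Proposition~\ref{G/M is a simple group of Lie type in characteristic p}, $L:=G/M$ is a simple group of Lie type in characteristic $p$, and the task is to show $L\cong S$. First I would dispose of the simple unitary groups with exceptional Schur multipliers, namely $\PSU_4(2)$, $\PSU_4(3)$, and $\PSU_6(2)$, by direct inspection of~\cite{Atl1}. Outside these, $\Schur(S)=\SU_n(p^a)$ and $\St_S(1)=p^{an(n-1)/2}$ is the unique $p$-power degree of $\Schur(S)$ by~\cite[Theorem~1.1]{Malle-Zalesskii}.

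Next I would eliminate the exceptional Lie-type candidates for $L$ exactly as in part~(i) of the proof of Lemma~\ref{lemma for linear groups}: writing $|L|_p=p^{bm}$ and letting $p^{bm'}-1$ be the largest cyclotomic factor of $|L|$, the relation $bm=an(n-1)/2$ together with Zsigmondy's theorem applied to $(p^{bm'}-1)\mid|\SU_n(q)|$ bounds $n$ by a small constant depending on the Lie type of $L$, after which each residual case is dispatched by an order non-divisibility check using~\cite{Lubeck}. For the remaining classical candidates $L\in\{\PSL_m,\Omega_{2m+1},\PSp_{2m},P\Omega^\pm_{2m},\PSU_m\}$ in characteristic $p$, I would follow part~(ii) of the linear proof: matching $|L|_p$ with $|S|_p$ produces a Diophantine relation between the rank and field parameters of $L$ and those of $S$, and Zsigmondy forces the rank of $L$ to be close to $n$. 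One then selects a small unipotent character $\chi$ of $L$ (from the tables in~\cite{C1}) whose degree and $p$-part are explicit, and derives a contradiction by showing that $\chi(1)$ cannot appear as a degree of $\SU_n(q)$ below $q^{2n}$; for $n\geq 10$, Table~\ref{table-unitary} restricts the $p$-parts of such degrees to a very short list, reproducing the linear-group trick.

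The main obstacle is precisely the one flagged at the start of this section: for $n\leq 9$ the corresponding short list of small degrees of $\SU_n(q)$ is not available, so the ``$p$-part of any small degree lies in a controlled set'' argument cannot be invoked directly. To handle this range I would replace the small-degree analysis by devices that bypass the enumeration of $\cd(\SU_n(q))$ in low dimensions: combine Lemma~\ref{degree of S whose no proper multiple is degree of Schur(S)} (which forbids any proper multiple of $\St_L(1)$ from lying in $\cd(\Schur(S))$) with refined order comparisons between $|L|$ and $|\SU_n(q)|$ using several Zsigmondy primes simultaneously, and, where needed, with inducibility constraints on characters of $L$ whose degrees are forced to equal $\St_S(1)$. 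Together these should force the rank of $L$ to equal $n$ and the field parameters to match, yielding $L=\PSU_m(p^b)$ with $m=n$ and $b=a$, and hence $L\cong S$.
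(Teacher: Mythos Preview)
Your overall plan tracks the paper's proof closely: the exceptional-multiplier cases, the exceptional Lie-type elimination via $bm=an(n-1)/2$ and Zsigmondy, and the case split over classical $L$ all appear in the paper essentially as you describe (note, though, that for $\SU_n(q)$ the factors $p^{ia}-(-1)^i$ force you to use $(p^{bm'}-1)\mid\prod_i(p^{2ia}-1)$, so the bound becomes $n\le 4m/m'+1$ rather than the $2m/m'+1$ from the linear case).

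The real issue is your handling of $n\le 9$. You correctly identify that Table~\ref{table-unitary} only covers $n\ge 10$, but your proposed workaround---Lemma~\ref{degree of S whose no proper multiple is degree of Schur(S)} plus ``refined order comparisons'' and unspecified ``inducibility constraints''---is too vague to be a proof, and Lemma~\ref{degree of S whose no proper multiple is degree of Schur(S)} does not obviously bite here since the unipotent degrees $\chi(1)$ you are testing are not multiples of $\St_S(1)$. The paper's fix is much simpler and is exactly the ``new argument'' alluded to at the start of the section: \emph{every} irreducible character degree of $\SU_n(p^a)$ has $p$-part a power of $p^a$ (this is a general feature of groups of Lie type over $\FF_{p^a}$: degrees are integer polynomials in $q=p^a$ divided by integers prime to $p$). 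Since the chosen unipotent character $\chi$ of $L$ has $|\chi(1)|_p=p^b$, this forces $a\mid b$. Combined with the Zsigmondy bound you already have (e.g.\ $b<3a$ for $L=\PSp_{2m}$ or $\Omega_{2m+1}$, $b<4a$ for $\PSL_m$ or $P\Omega^\pm_{2m}$, $b\le 7a$ for $\PSU_m$), this leaves finitely many values of $b/a$; substituting each into the rank equation (e.g.\ $2bm^2=an(n-1)$) gives a Diophantine constraint that either has no solution, or a single sporadic solution disposed of by order non-divisibility, or forces $n\ge 10$, where Table~\ref{table-unitary} finishes the job. No enumeration of small degrees of $\SU_n(q)$ for $n\le 9$ is ever needed.
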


\begin{proof} Since we can verify the lemma for the groups with exceptional Schur multiplier such as $\PSU_4(2), \PSU_4(3),\PSU_6(2)$
by using~\cite{Atl1}, we assume that $S=\PSU_n(q)=\PSU_n(p^a)$
($n\geq3$) is none of these. In that case, $\Schur(S)=\SU_n(p^a)$
and $\St_S(1)=p^{an(n-1)/2}$ is the only $p$-power degree of
$\SU_n(q)$ by~\cite[Theorem~1.1]{Malle-Zalesskii}. Recall that
$L:=G/M$ is a simple group of Lie type in characteristic $p$ and we
aim to show that $L\cong S$.

\medskip

(i) We first eliminate the simple Lie-type groups of exceptional
type. Suppose that $|L|_p=p^{bm}$ and the largest factor of the form
$p^x-1$ in the formula of $|L|$ is $p^{bm'}-1$. Then, as
$\St_L(1)=p^{bm}$ is a $p$-power degree of $\SU_n(q)$, we get
$p^{bm}=p^{an(n-1)/2}$ and therefore
\begin{equation}\label{bm=an(n-1)/2}bm=an(n-1)/2.\end{equation} It follows
that
$$bm'=\frac{an(n-1)}{2}\cdot \frac{m'}{m}.$$
As $$(p^{bm'}-1)\mid |\SU_n(q)|=p^{an(n-1)/2}\prod_{i=2}^n
(p^{ia}-(-1)^i),$$ we have $$(p^{bm'}-1)\mid \prod_{i=2}^n
(p^{2ia}-1).$$ Using Zsigmondy's theorem and recalling that
$n\geq3$, we deduce
$$bm'=\frac{an(n-1)}{2}\cdot \frac{m'}{m}\leq 2an.$$ Therefore,
$$n\leq 4m/m'+1.$$ Now we just argue as in the proof of Lemma~\ref{lemma for linear
groups} to get a contradiction.

\medskip

(ii) Next, we eliminate the simple classical groups in
characteristic $p$ except $\PSU_n(p^a)$.

$\bullet$ $L=\Omega_{2m+1}(p^b)$ or $\PSp_{2m}(p^b)$ with $m\geq2$.
As $\St_L(1)\in\cd(\SU_n(p^a))$, we have $p^{bm^2}=p^{an(n-1)/2}$
and hence \begin{equation}\label{5} 2bm^2=an(n-1).\end{equation}
This in particular implies that $b\neq 2a$ since $n(n-1)$ is never a
square. Moreover, since $(p^{2bm}-1)\mid |L|\mid |\SU_n(p^a)|$, it
follows by Zsigmondy's theorem that $bm\leq an$, which in turn
implies that $2m\geq n-1$ by~\eqref{5}. Therefore, $bm\leq an\leq
a(2m+1)<3am$ and hence $b<3a$.

Recall from the previous cases that $L$ has a unipotent character
$\chi$ of degree $\chi(1)=(p^{bm}-1)(p^{bm}-p^b)/2(p^b+1)$. As this
degree has $p$-part $p^b$ while the $p$-part of any degree of
$\SU_n(p^a)$ is a power of $p^a$, we deduce that $b=a$ since $2a\neq
b<3a$. Now equation~\ref{5} implies
 $$2m^2=n(n-1).$$ This happens only if $(n,m)=(9,6)$ or $n\geq 10$.
The former case leads to a contradiction since
$|\Omega_{13}(p^a)|=|\PSp_{12}(p^a)|$ does not divide
$|\SU_{9}(p^a)|$. Thus $n\geq 10$.

Recall that the degree of $\chi$ is
$(p^{bm}-1)(p^{bm}-p^b)/2(p^b+1)$, which is smaller than
$p^{2bm}\leq p^{2an}$. By inspecting the list of degrees smaller
than $q^{2n}$ of $\SU_n(q)$ with $n\geq 10$
in~\cite[Propositions~6.3 and~6.6]{Libeck-O'Brian-Shalev-Tiep},
which has been reproduced in Table~\ref{table-unitary}, we come up
with three possibilities as follow:
$$\chi(1)=\frac{q^n+(-1)^nq}{q+1}, \frac{(q^n-(-1)^n)(q^n+(-1)^nq)}{(q+1)(q^2-1)},
   \text{ or }\frac{(q^n-(-1)^n)(q^{n-1}+(-1)^nq)}{(q+1)^2},$$ where
   $q=p^a$. However, with the conditions $b=a$ and $2m^2=n(n-1)$, it is easy
to see that these equations have no solutions.

$\bullet$ $L=\PSL_m(p^b)$ with $m\geq2$. As $|L|_p=|\SU_n(p^a)|_p$,
we have $bm(m-1)=an(n-1)$. Since $|L|\mid |\SU_n(p^a)|$, the
Zsigmondy's theorem implies that $bm\leq 2an$. It then follows that
$2(m-1)\geq n-1$ and hence $bm\leq 2an\leq 2a(2m-1)<4am$. Thus
$b<4a$.

We know that $\PSL_m(p^b)$ has a unipotent character $\chi$ of
degree $(p^{bm}-p^b)/(p^b-1)$ and this degree belongs to
$\cd(\SU_n(p^a))$. As $|\chi(1)|_p=p^b$ and the $p$-part of any
degree of $\SU_n(p^a)$ is a power of $p^a$, the fact $b<4a$ implies
that $b=a,2a$, or $3a$. First, if $b=a$ then $m=n$ and this is
impossible since $|L|=|\PSL_n(p^a)|\nmid |\SU_n(p^a)| $ for every
$n\geq3$. Next, if $b=2a$ then
$$2m(m-1)=n(n-1),$$ which implies that $(n,m)=(4,3)$ or $n\geq10$.
The former case does not happen since
$|\PSL_m(p^b)|=|\PSL_3(p^{2a})|\nmid |\SU_4(p^a)|$. Hence we must
have $n\geq 10$. Recall that the degree of $\chi$ is
$(p^{bm}-p^b)/(p^b-1)$, which is smaller than $p^{2an}$. Using
Table~\ref{table-unitary} again, one sees that there is only one
possibility
$$\frac{p^{bm}-p^b}{p^b-1}=\frac{(p^{an}-(-1)^n)(p^{a(n-1)}+(-1)^np^{2a})}{(p^a+1)(p^{2a}-1)}.$$
Again, it is easy to see that this equation has no solutions since
$b=2a$ and $2m(m-1)=n(n-1)$,.

Finally, we consider the remaining case $b=3a$. Then
$3m(m-1)=n(n-1)$ and hence $(n,m)=(3,2)$ or $n\geq10$. The case
$(n,m)=(3,2)$ can not happen as $|L|=|\PSL_2(p^{3a})|>|\SU_3(p^a)|$
and the case $n\geq10$ is handled exactly as above.

$\bullet$ $L=P\Omega^\pm_{2m}(p^b)$ with $m\geq4$. Then we have
$p^{bm(m-1)}=p^{an(n-1)/2}$ and hence
\begin{equation}\label{6}2bm(m-1)=an(n-1).\end{equation} Moreover, since $(p^{2b(m-1)}-1)\mid
|L|\mid |\SU_n(p^a)|$, it follows by Zsigmondy's theorem that
$b(m-1)\leq an.$ Therefore, $2m\geq n-1$ and hence $b(m-1)\leq
a(2m+1)<4a(m-1)$. This means $b<4a$.

Recall $L$ has a unipotent character $\chi$ of degree
$(p^{bm}\mp1)(p^{b(m-1)}\pm p^b)/(p^{2b}-1)$, where
$\chi(1)<p^{2b(m-1)}\leq p^{2an}$. As above, we deduce that $b=a$,
$2a$, or $3a$.

First, if $b=3a$ then the inequality $b(m-1)\leq a(2m+1)$ implies
that $m=4$ and hence $n=9$. Now one can check that
$|L|=|P\Omega^\pm_8(p^{3a})|$ does not divide $|\SU_9(p^a)|$, a
contradiction. Second, if $b=2a$ then~\eqref{6} implies that
$$4m(m-1)=n(n-1).$$ Also, $$2(m-1)\leq n\leq 2m+1.$$ Now one see
that two above equations violate each other. Finally we assume that
$b=a$. Then it follows by~\eqref{6} that $$2m(m-1)=n(n-1),$$ which
again implies that $(n,m)=(4,3)$ or $n\geq 10$. Arguing similarly as
in the previous case, one gets to a contradiction.

$\bullet$ $L=\PSU_m(p^b)$ with $m\geq3$. Again we have
$bm(m-1)=an(n-1)$ and moreover $b(m-1)\leq 2an$ since $|L|\mid
|\SU_n(p^a)|$. We deduce that $2m\geq n-1$ and hence $b(m-1)\leq
2an\leq 2a(2m+1)\leq 7a(m-1)$, whence $b\leq7a$.

Recall that $\PSU_m(p^b)$ has a unipotent character $\chi$ of degree
$(p^{bm}+(-1)^mp^b)/(p^b+1)$. Now one just argues as in the previous
case to conclude that $b=a$, which also implies $m=n$. That means
$L=\PSU_n(p^a)$, as we wanted to prove.
\end{proof}


\section{Symplectic groups}\label{section-symplectic groups}

\begin{table}[h]\caption{Possible nontrivial degrees up to $q^{2n}$ of $\Sp_{2n}(q)$, $q$ odd (cf.~\cite{Lubeck,Nguyen}).}\label{table-symplectic}
\begin{tabular}{ll}
Value of $n$ & Degrees\\\hline
$n=2$ & \begin{tabular}{l}$\frac{q^2\pm1}{2},\frac{q(q\pm1)^2}{2},\frac{(q\pm1)(q^2+1)}{2},\frac{q(q^2+1)}{2},(q\pm1)(q^2+1),q(q^2+1),$\\
$\frac{(q^2+1)(q\pm1)^2}{2},\frac{q(q\pm1)(q^2+1)}{2},\frac{q^2(q^2\pm1)}{2},\frac{q^4-1}{2},(q^2+1)(q-1)^2,$\\$q(q-1)(q^2+1),(q^2-1)^2, q^4-1$\end{tabular}\\
 $n=3$ &\begin{tabular}{l}$\frac{q^3\pm1}{2},\frac{q(q\pm1)(q^3\pm1)}{2},\frac{(q^3\pm1)(q^2\pm q+1)}{2},\frac{q(q^2+1)(q^2\pm q+1)}{2}$\\$(q\pm1)(q^4+q^2+1),\frac{q(q\pm1)(q^4+q^2+1)}{2},
 \frac{q^6-1}{2},\frac{(q^2+1)(q^4+q^2+1)}{2},$\\$(q-1)(q^2+1)(q^3-1)$\end{tabular}\\

 $n\geq4$& \begin{tabular}{l}$\frac{q^n\pm 1}{2}, \frac{(q^n\pm 1)(q^n\pm q)}{2(q\pm 1)},\frac{q^{2n}-1}{2(q\pm 1)}, \frac{q^{2n}-1}{q\pm 1}$\end{tabular}\\\hline
\end{tabular}
\end{table}

\begin{table}[h]\caption{Possible nontrivial degrees up to $q^{2n}$ of $\Sp_{2n}(q)$, $q$ even (cf.~\cite{Guralnick-Tiep,Lubeck}).}\label{table-symplectic2}
\begin{tabular}{ll}
Value of $n$ & Degrees\\\hline
$n=2$ & \begin{tabular}{l}$\frac{q(q\pm1)^2}{2},\frac{q(q^2+1)}{2}, (q\pm1)(q^2+1),(q-1)^2(q^2+1),$\\$q(q-1)(q^2+1),(q^2-1)^2,q^4-1$\end{tabular}\\
 $n=3, q\neq 2$ &\begin{tabular}{l}$\frac{(q^3\pm 1)(q^3\pm q)}{2(q\pm 1)},\frac{q^{6}-1}{q\pm 1}, (q-1)(q^2+1)(q^3-1)$\end{tabular}\\

 $n\geq4, (n,q)\neq(4,2)$& \begin{tabular}{l}$\frac{(q^n\pm 1)(q^n\pm q)}{2(q\pm 1)},\frac{q^{2n}-1}{q\pm 1}$\end{tabular}\\\hline
\end{tabular}
\end{table}

\begin{lemma}\label{lemma for symplectic groups} Proposition~\ref{non-containment of character degrees of quasisimple
groups} is true when $S$ is a simple symplectic group.
\end{lemma}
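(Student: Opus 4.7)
The plan is to follow the template of Lemmas~\ref{lemma for linear groups} and~\ref{lemma for unitary groups}, with $\Schur(S)=\Sp_{2n}(p^a)$ in place of $\SL_n(q)$ or $\SU_n(q)$. First I would eliminate the symplectic simple groups with exceptional Schur multiplier (namely $\PSp_4(2)'\cong A_6$, $\PSp_4(3)$, and $\PSp_6(2)$, listed in Lemma~\ref{degree of S whose no proper multiple is degree of Schur(S)}) by direct inspection of \cite{Atl1}. In the remaining cases $S=\PSp_{2n}(p^a)$, one has $\Schur(S)=\Sp_{2n}(p^a)$, and by \cite[Theorem~1.1]{Malle-Zalesskii} the Steinberg degree $\St_S(1)=p^{an^2}$ is the unique $p$-power member of $\cd(\Sp_{2n}(p^a))$. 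Writing $L=L(p^b)$ with $|L|_p=p^{bm}$ and recalling that $\St_L(1)\in\cd(\Sp_{2n}(p^a))$ is a $p$-power, this yields the fundamental relation $bm=an^2$.

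The treatment of exceptional Lie-type groups $L$ is structurally identical to part~(i) of the linear case. Let $p^{bm'}-1$ be the largest cyclotomic factor of $|L|$; since $(p^{bm'}-1)\mid|\Sp_{2n}(p^a)|=p^{an^2}\prod_{i=1}^{n}(p^{2ai}-1)$, Zsigmondy's theorem forces $bm'\le 2an$, outside the usual small-parameter exceptions to be handled by hand. Combined with $bm=an^2$ this gives $n\le 2m/m'+1$, and since $m/m'\le 4$ one gets $n\le 9$. Each remaining candidate pair $(L,n)$ is then eliminated by exhibiting a Zsigmondy prime divisor of $|L|$ that fails to divide $|\Sp_{2n}(p^a)|$, in the spirit of the $E_8$ computation already carried out in Lemma~\ref{lemma for linear groups}.

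The bulk of the work is the case-by-case elimination of classical $L$. For each type $L\in\{\PSL_m(p^b), \PSU_m(p^b), \Omega_{2m+1}(p^b), P\Omega^\pm_{2m}(p^b), \PSp_{2m}(p^b)\}$ I would combine the Steinberg relation $bm_L=an^2$ (where $m_L$ is the appropriate exponent for the type) with the Zsigmondy bound from $|L|\mid|\Sp_{2n}(p^a)|$ to confine $m$ to a narrow range in terms of $n$, and $b$ to a small rational multiple of $a$. Then I take a small unipotent character $\chi$ of $L$, of the same shape used in the linear and unitary lemmas, with $\chi(1)<p^{2an}$ and $\chi(1)_p=p^b$. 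The new input is Tables~\ref{table-symplectic} and~\ref{table-symplectic2}: every nontrivial degree of $\Sp_{2n}(p^a)$ not exceeding $p^{2an}$ has $p$-part in $\{1,p^a\}$, so $\chi(1)\in\cd(\Sp_{2n}(p^a))$ forces $b=a$. Substituting $b=a$ into $bm_L=an^2$ leaves a rigid integer equation whose only solution compatible with $|L|\mid|\Sp_{2n}(p^a)|$ and $\cd(L)\subseteq\cd(\Sp_{2n}(p^a))$ is $L=\PSp_{2n}(p^a)=S$; each remaining solution is eliminated either by a further Zsigmondy-prime order obstruction or by producing a second unipotent degree of $L$ whose shape does not occur in the tables.

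The main obstacle will be the small-rank cases $n=2$ and $n=3$, where Tables~\ref{table-symplectic} and~\ref{table-symplectic2} contain many intricate low-degree entries that can accidentally coincide with unipotent degrees of groups of a different type; these must be resolved by a direct cross-check against \cite{Lubeck} and \cite{Nguyen}, analogous to the small-$n$ analyses performed in the linear and unitary lemmas. A secondary delicacy is separating $\Omega_{2m+1}(p^b)$ from $\PSp_{2m}(p^b)$ at odd $p$: these have the same order, and at $m=2$ the accidental isomorphism $\Omega_5(q)\cong\PSp_4(q)$ yields precisely the conclusion we want, while for $m\ge 3$ the two must be distinguished by a character degree present in one but not the other, such as the Weil-type degrees $(q^m\pm 1)/2$ of $\PSp_{2m}$ which appear in Table~\ref{table-symplectic} but not in the unipotent degree list of $\Omega_{2m+1}$.
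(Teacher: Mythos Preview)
Your plan matches the paper's proof in structure and in nearly all details: the Steinberg relation $bm=an^2$, the Zsigmondy bound $bm'\leq 2an$, and the comparison of a small unipotent degree $\chi$ of $L$ against Tables~\ref{table-symplectic} and~\ref{table-symplectic2} are exactly what the paper does. One step, however, points the wrong way.

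To exclude $L=\Omega_{2n+1}(p^a)$ after reducing to $b=a$, $m=n$ (with $n\geq3$ and $p$ odd), you must contradict the standing hypothesis $\cd(L)\subseteq\cd(\Sp_{2n}(p^a))$, i.e., exhibit a degree of $\Omega_{2n+1}(p^a)$ that does \emph{not} lie in $\cd(\Sp_{2n}(p^a))$. Your proposed Weil degree $(q^n\pm1)/2$ is a degree of $\Sp_{2n}$ missing from $\Omega_{2n+1}$; that only shows the irrelevant non-inclusion $\cd(\Sp_{2n})\nsubseteq\cd(\Omega_{2n+1})$ and yields no contradiction. The paper instead uses the degree $(q^{2n}-1)/(q^2-1)$ of $\Omega_{2n+1}(p^a)$ (cf.~\cite[\S6]{TZ1}), which lies below $q^{2n}$ yet matches no entry in the tables---this is the direction you need. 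A related minor inaccuracy: the $p$-parts of sub-$q^{2n}$ degrees of $\Sp_{2n}(p^a)$ are not always confined to $\{1,p^a\}$; at $n=2$ with odd $q$ the degree $q^2(q^2\pm1)/2$ contributes $p^{2a}$, and for even $q$ one sees $p$-parts $q/2$. The paper therefore works with $\{1,p^a,p^{2a}\}$ (and $p^a/2$ when $p=2$), and in the $P\Omega_{2m}^\pm$ subcase supplements $b<2a$ with the observation that $m(m-1)=n^2$ is impossible, giving $b\neq a$ directly, rather than first deducing $b=a$.
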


\begin{proof} Assume that $S=\PSp_{2n}(q)=\PSp_{2n}(p^a)$ where $n\geq2$. As the case $\Sp_6(2)$
can be checked directly by~\cite{Atl1}, we assume that $(n,q)\neq(3,2)$ and hence $\Schur(S)=\Sp_{2n}(p^a)$ and
$\St_S(1)=p^{an^2}$ is the only $p$-power degree of $\Sp_{2n}(q)$
by~\cite[Theorem~1.1]{Malle-Zalesskii}. By Proposition~\ref{G/M is a
simple group of Lie type in characteristic p}, $L:=G/M$ is a simple
group of Lie type in characteristic $p$ and we aim to show that
$L\cong S$.

\medskip

(i) The simple Lie-type groups of exceptional type can be eliminated
as follows. Assume so and suppose that $|L|_p=p^{bm}$ and the
largest factor of the form $p^x-1$ in $|L|$ is $p^{bm'}-1$. Then, as
$\cd(L)\subseteq \cd(\Sp_{2n}(q))$, we get $p^{bm}=p^{an^2}$ and
therefore
\begin{equation}\label{bm=an^2}bm=an^2 \text{ and } bm'=an^2\cdot \frac{m'}{m}.\end{equation}
As $$(p^{bm'}-1)\mid |L|\mid |\Sp_{2n}(p^a)|=p^{an^2}\prod_{i=1}^n
(p^{2ia}-1),$$ using Zsigmondy's theorem, one gets
$$an^2\cdot \frac{m'}{m}\leq 2an \text{ or } an^2\cdot
\frac{m'}{m}=6>2an.$$ Since the latter case can not happen, we must
have $an^2m'/m\leq 2an$ and therefore $$n\leq 2m/m'.$$ Again, as
$m/m'$ is known, one obtains an upper bound for $n$. For small
values of $n$, we use Equation~\ref{bm=an^2} to show that either
$|L|\nmid |\SL_n(q)|$ or $\cd(L)\nsubseteq \cd(\SL_n(q))$ with the
help of~\cite{Lubeck} and Zsigmondy's theorem. Let us present the
arguments for the case $L(p^b)=E_7(p^b)$ as an example.

Assume by contrary that $L(p^b)=E_7(p^b)$. Then $m=63$ and $m'=18$.
Therefore $n\leq 7$. For every $n\leq 7$, we obtain a contradiction
by showing that $|E_7(p^b)|\nmid |\Sp_{2n}(p^a)|$. For instance, if
$n=7$ then $63b=49a$ by~\eqref{bm=an^2}. Therefore $9b=7a$ and hence
$a=9c$ and $b=7c$ for some positive integer $c$. Now the Zsigmondy
prime $\ell(p,1,98c)$ is a divisor of $|E_7(p^{7c})|$ but does not
divide $|\Sp_{14}(p^{9c})|$.

\medskip

(ii) We next eliminate the remaining simple classical groups in
characteristic $p$ except $\PSp_{2n}(p^a)$.

$\bullet$ $L=P\Omega^\pm_{2m}(p^b)$ with $m\geq4$. Then we have
$p^{bm(m-1)}=p^{an^2}$ and hence
\begin{equation}\label{1}bm(m-1)=an^2.\end{equation} Since $(p^{2b(m-1)}-1)\mid
|\Sp_{2n}(p^a)|$, it follows by Zsigmondy's theorem that $b(m-1)\leq
an$, whence $m\geq n$ by~\eqref{1}. This in particular implies that
$b<2a$. Moreover, as $m(m-1)$ can not be equal to $n^2$, we obtain
that $b\neq a$.

As before, $L$ has a unipotent character $\chi$ of degree
$(p^{bm}\mp1)(p^{b(m-1)}\pm p^b)/(p^{2b}-1)$ and
$$\chi(1)<p^{2b(m-1)}\leq p^{2an}.$$
First we consider the case when $q$ is odd. Using the classification
of low-dimensional irreducible characters of $\Sp_{2n}(q)$ of degree
up to $q^{2n}$ in~\cite[Corollary~4.2]{Nguyen} for $n\geq6$
and~\cite{Lubeck} for $n\leq5$, we see that all degrees less than
$p^{2an}$ of $\Sp_{2n}(p^a)$ have $p$-parts $1, p^a$, or $p^{2a}$.
Therefore, these degrees can not be $\chi(1)$ since
$|\chi(1)|_p=p^b$ where $b\neq a,2a$. We have shown that
$\chi(1)\notin\cd(Sp_{2n}(p^a))$, a contradiction. The case when $q$
even is handled exactly in the same way by using a result on
low-dimensional characters of symplectic groups in even
characteristic of Guralnick and
Tiep~\cite[Theorem~6.1]{Guralnick-Tiep}.

$\bullet$ $L=\PSL_m(p^b)$ or $\PSU_m(p^b)$ with $m\geq2$. Then we
have $p^{bm(m-1)/2}=p^{an^2}$ and hence $bm(m-1)=2an^2$. Since
$(p^{bm}-1)\mid |\Sp_{2n}(p^a)|$, it follows by Zsigmondy's theorem
that $bm\leq 2an$ since the case $6=bm>2an$ can not happen. It
follows that $m-1\geq n$ and hence $2an\geq b(n+1)$. In particular,
$b<2a$.

We already know that $L$ has a unipotent character $\chi$ of degree
$(p^{bm}\pm p^b)/(p^{b}\pm1)$, which is smaller than $p^{bm}\leq
p^{2an}.$ Using the list of characters of degrees up to $p^{2an}$ in
Tables~\ref{table-symplectic} and~\ref{table-symplectic2}, it is
routine to check that $\chi(1)$ can not be equal to any of such
degree. Indeed, as $|\chi(1)|_p=p^b$ and $b<2a$, one deduces that
$b=a$ and only needs to check for degrees with $p$-part $p^a$ or
$p^a/2$ when $p=2$.

$\bullet$ $L=\PSp_{2m}(p^b)$ or $\Omega_{2m+1}(p^b)$ with $m\geq2$.
Then we have $p^{bm^2}=p^{an^2}$ and hence $bm^2=an^2$ and in
particular we have $b\neq 2a$. Since $(p^{2bm}-1)\mid
|\Sp_{2n}(p^a)|$, it follows by Zsigmondy's theorem that $bm\leq an$
since the case $6=2bm>2an$ can not happen.

Recall that $L$ has a unipotent character $\chi$ of degree
$(p^{bm}-1)(p^{bm}-p^b)/2(p^{b}+1)$, which is smaller than
$p^{2bm}\leq p^{2an}.$ Arguing as in $L=P\Omega^\pm_{2m}(p^b)$ case
and recalling that $b\neq2a$, we obtain that $b=a$ and therefore
$m=n$. We now just need to eliminate the possibility
$L=\Omega_{2n+1}(p^a)$ with $n\geq3$ and $p\geq3$ (note that
$\Omega_5(q)\cong \PSp_4(q)$ and
$\Omega_{2n+1}(2^a)\cong\PSp_{2n}(2^a)$). As mentioned
in~\cite[\S6]{TZ1}, $\Omega_{2n+1}(p^a)$ with $n\geq3$ and $p$ an
odd prime has an irreducible character of degree
$(p^{2an}-1)/(p^{2a}-1)$, which is also smaller than $p^{2an}$.
Again, the classification of irreducible characters of
$\Sp_{2n}(p^a)$ of degrees smaller than $p^{2an}$ shows that
$\Sp_{2n}(p^a)$ has no such degree, as desired.
\end{proof}


\section{Orthogonal groups}\label{section-orthogonal groups in even
dimension}

\begin{table}[h]\caption{Possible nontrivial degrees up to $q^{2n}$ of $\Spin_{2n+1}(q)$, $n\geq3$, $q$ odd (cf.~\cite{Lubeck,Nguyen}).}\label{table-orthogonal in odd dimension}
\begin{tabular}{ll}
Value of $n$ & Degrees\\\hline
 $n=3$ &\begin{tabular}{l}$\frac{q^{6}-1}{q^2-1},\frac{q(q^{6}-1)}{q^2-1},\frac{(q^3\pm1)(q^3\pm q)}{2(q\pm1)},\frac{q^{6}-1}{q\pm1},$\\
 $\frac{1}{2}(q\pm1)(q^2+1)(q^3\pm1),(q-1)(q^2+1)(q^3-1)$\end{tabular}\\
 $n=4$& \begin{tabular}{l}$\frac{q^{8}-1}{q^2-1},\frac{q(q^{8}-1)}{q^2-1},\frac{(q^4\pm1)(q^4\pm q)}{2(q\pm1)},\frac{q^{8}-1}{q\pm1},$\\
 $\frac{1}{2}(q^2-1)(q^6-1),\frac{1}{2}(q^4+1)(q^4+q^2+1)$\end{tabular}\\

 $n\geq5$& \begin{tabular}{l}$\frac{q^{2n}-1}{q^2-1},\frac{q(q^{2n}-1)}{q^2-1},\frac{(q^n\pm1)(q^n\pm q)}{2(q\pm1)},\frac{q^{2n}-1}{q\pm1}$\end{tabular}\\\hline
\end{tabular}
\end{table}

\begin{lemma}\label{lemma for orthogonal groups in odd dimension} Proposition~\ref{non-containment of character degrees of quasisimple
groups} is true when $S$ is a simple orthogonal group in odd
dimension.
\end{lemma}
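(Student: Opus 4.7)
The proof largely parallels Lemma~\ref{lemma for symplectic groups}, so I sketch only the new points. After disposing of $\Omega_7(3)$, the only odd-dimensional orthogonal simple group with exceptional Schur multiplier in the relevant range, via direct inspection of \cite{Atl1}, one may assume $S=\Omega_{2n+1}(p^a)$ with $n\geq 3$ and $p$ odd; the case $p=2$ is subsumed by Lemma~\ref{lemma for symplectic groups}, since $\Omega_{2n+1}(2^a)\cong\Sp_{2n}(2^a)$. Then $\Schur(S)=\Spin_{2n+1}(p^a)$, and by \cite[Theorem~1.1]{Malle-Zalesskii} its unique $p$-power irreducible character degree is $\St_S(1)=p^{an^2}$. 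Proposition~\ref{G/M is a simple group of Lie type in characteristic p} gives that $L:=G/M$ is a simple group of Lie type in characteristic $p$; the goal is to prove $L\cong S$.

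For $L$ of exceptional Lie type, the Zsigmondy argument of Lemma~\ref{lemma for symplectic groups}(i) applies verbatim. Writing $|L|_p=p^{bm}$ and $p^{bm'}-1$ for the largest cyclotomic divisor of $|L|$, matching Steinberg degrees yields $bm=an^2$, and since the cyclotomic factors of $|\Spin_{2n+1}(p^a)|=p^{an^2}\prod_{i=1}^n(p^{2ia}-1)$ have exponents at most $2an$, Zsigmondy's theorem forces $bm'\leq 2an$, hence $n\leq 2m/m'$. With $m/m'$ bounded above by $4$, $n$ is bounded, and each remaining pair $(L,n)$ is eliminated by a direct divisibility check using Zsigmondy primes.

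For $L$ of classical type in characteristic $p$ other than odd-dimensional orthogonal, one proceeds family by family, in parallel with the four bullets of Lemma~\ref{lemma for symplectic groups}(ii). In each case the strategy is to match $p$-parts to get a relation between the defining field of $L$ and $p^a$, to apply Zsigmondy to $|L|\mid|\Spin_{2n+1}(p^a)|$ to bound $b/a$ numerically, and then to exhibit a unipotent character of $L$ of $p$-part $p^b$ (the same unipotent characters used in Lemma~\ref{lemma for symplectic groups}) and compare with the $p$-parts of the entries of Table~\ref{table-orthogonal in odd dimension} (all of which are $1$ or $p^a$) to force $b=a$. This in turn pins down the dimension of $L$. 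The types not ruled out in this way correspond to $L\cong\PSp_{2n}(p^a)$ or $L\cong\Omega_{2n+1}(p^a)$, both having the same order as $S$.

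The principal obstacle is the coincidence $|\PSp_{2n}(p^a)|=|\Omega_{2n+1}(p^a)|=|S|$ with $p$ odd, which prevents eliminating $L\cong\PSp_{2n}(p^a)$ on order grounds; moreover, $\PSp_{2n}(p^a)$ and $\Omega_{2n+1}(p^a)$ share the same unipotent character degrees (via Langlands duality of $C_n$ and $B_n$), so no unipotent character can distinguish them. The resolution is to use the Weil representation of $\Sp_{2n}(p^a)$: one of its irreducible components, of degree $(p^{an}-1)/2$, factors through the simple quotient $\PSp_{2n}(p^a)$ and is in fact the minimal nontrivial character degree of $\PSp_{2n}(p^a)$ by \cite[Table~II]{TZ1}. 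Inspecting Table~\ref{table-orthogonal in odd dimension} shows that $(p^{an}-1)/2$ does not appear among the degrees of $\Spin_{2n+1}(p^a)$ below $p^{2an}$; the smallest entry there is $(p^{2an}-1)/(p^{2a}-1)$, which strictly exceeds $(p^{an}-1)/2$ for $n\geq 3$. Hence $\cd(\PSp_{2n}(p^a))\nsubseteq\cd(\Spin_{2n+1}(p^a))$, which eliminates $L\cong\PSp_{2n}(p^a)$ and completes the proof that $L\cong\Omega_{2n+1}(p^a)=S$.
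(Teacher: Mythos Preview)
Your proof is correct and follows essentially the same approach as the paper's (very terse) argument, which simply instructs the reader to mimic Lemma~\ref{lemma for symplectic groups} with Table~\ref{table-orthogonal in odd dimension} in place of Tables~\ref{table-symplectic} and~\ref{table-symplectic2}. You are more explicit than the paper at the one genuinely new point---distinguishing $L\cong\PSp_{2n}(p^a)$ from $S=\Omega_{2n+1}(p^a)$---and your use of the Weil degree $(p^{an}-1)/2\in\cd(\PSp_{2n}(p^a))$, which lies strictly below $d_1(\Spin_{2n+1}(p^a))=(p^{2an}-1)/(p^{2a}-1)$, is exactly the natural way to carry this out.
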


\begin{proof} Assume that $S=\Omega_{2n+1}(q)=\Omega_{2n+1}(p^a)$ where $n\geq3$ and $p$ an odd prime. As the case $\Omega_7(3)$ can be checked directly by~\cite{Atl1},
 we assume that $(n,q)\neq(3,3)$ and hence $\Schur(S)=\Spin_{2n+1}(p^a)$ and
$\St_S(1)=p^{an^2}$ is the only $p$-power degree of
$\Spin_{2n+1}(q)$ by~\cite[Theorem~1.1]{Malle-Zalesskii}. In view of
Proposition~\ref{G/M is a simple group of Lie type in characteristic
p}, we have known that $L:=G/M$ is a simple group of Lie type in
characteristic $p$ and we want to show that $L=S$. The simple groups
of exceptional Lie type is eliminated as in Lemma~\ref{lemma for
symplectic groups}. For classical groups, we also follow the proof
of Lemma~\ref{lemma for symplectic groups} and make use of the
classification of irreducible characters of low degrees of
$\Spin_{2n+1}(q)$ by Nguyen in \cite[Theorem~1.2]{Nguyen}.
\end{proof}

\begin{table}[h]\caption{Possible nontrivial degrees up to $q^{2n-2}$ of $\Spin^\pm_{2n}(q)$, $n\geq4$ (cf.~\cite{Lubeck,Nguyen}).}\label{table-orthogonal in even dimension}
\begin{tabular}{ll}
Value of $n,q$ & Degrees\\\hline

\begin{tabular}{l} $n\geq4$, $q$ odd\end{tabular} & $\frac{(q^n\pm1)(q^{n-1}\mp q)}{q^2-1},\frac{(q^n\pm1)(q^{n-1}\pm1)}{2(q\pm1)},\frac{(q^n\pm1)(q^{n-1}\pm1)}{q+1}$\\
 \begin{tabular}{l} $n\geq4$, $q$ even,\\ $(n,q)\neq(4,2),(5,2)$\end{tabular} & $\frac{(q^n\pm1)(q^{n-1}\mp q)}{q^2-1},\frac{(q^n\pm1)(q^{n-1}\pm1)}{q+1}$\\\hline
\end{tabular}
\end{table}

\begin{lemma}\label{lemma for orthogonal groups in even dimension} Proposition~\ref{non-containment of character degrees of quasisimple
groups} is true when $S$ is a simple orthogonal group in even
dimension.
\end{lemma}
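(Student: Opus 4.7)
The plan is to mimic the proof structure of Lemmas~\ref{lemma for symplectic groups} and~\ref{lemma for orthogonal groups in odd dimension}, treating $S = P\Omega^\pm_{2n}(q) = P\Omega^\pm_{2n}(p^a)$ with $n \geq 4$. First I would dispose of the groups with exceptional Schur multiplier, namely $P\Omega_8^+(2)$ and $P\Omega_8^+(3)$, by direct inspection of~\cite{Atl1}. For all other cases, $\Schur(S) = \Spin^\pm_{2n}(p^a)$ and, by~\cite[Theorem~1.1]{Malle-Zalesskii}, $\St_S(1) = p^{an(n-1)}$ is the unique $p$-power degree of $\Schur(S)$. By Proposition~\ref{G/M is a simple group of Lie type in characteristic p} we already know $L := G/M$ is a simple group of Lie type in characteristic $p$, so the goal is to force $L \cong S$.

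The elimination of simple groups of exceptional Lie type is done exactly as before: if $|L|_p = p^{bm}$ and the largest factor of the form $p^x - 1$ in $|L|$ is $p^{bm'} - 1$, then matching Steinberg degrees gives $bm = an(n-1)$, while $(p^{bm'}-1) \mid |\Schur(S)|$ together with Zsigmondy's theorem yields $bm' \leq 2an$, hence $n \leq 2m/m' + 1$. This bounds $n$ and each remaining pair $(L, n)$ is ruled out by showing $|L| \nmid |\Spin^\pm_{2n}(p^a)|$ using a suitable Zsigmondy prime, as was done for $E_7$ and $E_8$ in the previous lemmas.

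The classical-group cases are handled uniformly as follows. For each $L \in \{\PSL_m(p^b), \PSU_m(p^b), \PSp_{2m}(p^b), \Omega_{2m+1}(p^b), P\Omega^\pm_{2m}(p^b)\}$, match Steinberg degrees to get one equation in $(b,m,a,n)$, and apply Zsigmondy to the divisibility $(p^{bm'}-1) \mid |\Spin^\pm_{2n}(p^a)|$ (using the largest cyclotomic factor of $|L|$) to obtain $m \geq n - c$ for a small constant $c$, which combined with the Steinberg equation bounds $b/a$ by a small integer. Then pick a known low-degree unipotent character $\chi$ of $L$ (of the form $(p^{bm} \pm p^b)/(p^b \pm 1)$, or $(p^{bm}-1)(p^{bm}-p^b)/(2(p^b+1))$, or $(p^{bm} \mp 1)(p^{b(m-1)} \pm p^b)/(p^{2b}-1)$), note that $\chi(1) < p^{2a(n-1)}$ so $\chi(1) \in \cd(\Spin^\pm_{2n}(p^a))$ forces $\chi(1)$ to appear in Table~\ref{table-orthogonal in even dimension}. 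Since all degrees in that table have $p$-part $1$ or $p^a$, the relation $|\chi(1)|_p = p^b$ forces $b = a$, and substituting back into the Steinberg equation forces $m = n$ (and the correct family).

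The main obstacle I anticipate is the last step of matching $L = P\Omega^\mp_{2n}(p^a)$ against $S = P\Omega^\pm_{2n}(p^a)$ of opposite sign: here $|L| = |S|$ and all the parameters coincide, so the contradiction must come from a direct comparison of degrees, namely exhibiting a unipotent degree of $L$ not present in $\cd(S)$; I expect the smallest nontrivial unipotent degrees $(p^{an}\mp 1)(p^{a(n-1)}\pm p^a)/(p^{2a}-1)$ of the two signs to differ and to do the job. A secondary nuisance is the coincidence $\Omega_{2m+1}(2^b) \cong \PSp_{2m}(2^b)$ in even characteristic and the low-dimensional isomorphisms $P\Omega_6^\pm \cong \PSL_4/\PSU_4$, which force us to assume $m \geq 4$ when $L$ is of type $D$ and to treat small cases $(n,q) = (4,2), (5,2)$ separately, as reflected in Table~\ref{table-orthogonal in even dimension}.
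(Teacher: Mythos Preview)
Your proposal is correct and follows essentially the same route as the paper: dispose of exceptional Schur multiplier cases via \cite{Atl1}, eliminate exceptional Lie types by the Steinberg/Zsigmondy bound $n\leq 2m/m'$ (the paper gets the slightly sharper $bm'\leq 2a(n-1)$ rather than your $bm'\leq 2an$, but this is immaterial), and for each classical family combine the Steinberg-degree equation with a Zsigmondy divisibility to bound $b/a$, then force $b=a$ and $m=n$ by comparing the $p$-part of a small unipotent degree of $L$ against Table~\ref{table-orthogonal in even dimension}, finishing with the opposite-sign comparison via the unipotent degree $(p^{an}\mp1)(p^{a(n-1)}\pm p^a)/(p^{2a}-1)$. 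One small inaccuracy: $P\Omega_8^+(3)$ does \emph{not} have an exceptional Schur multiplier (only $P\Omega_8^+(2)$ does among the $D$-type groups), so it need not be singled out; this is harmless since a direct check would only be redundant.
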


\begin{proof} We only present here the proof of the case
$S=P\Omega_{2n}^+(p^a)$ with $n\geq4$. The minus type orthogonal
groups are dealt similarly.

We assume that $S\neq P\Omega_8^+(2)$ as this special case can be
eliminated easily by using~\cite{Atl1}. Then
$\Schur(S)=\Spin_{2n}^+(p^a)$ and $\St_S(1)=p^{an(n-1)}$ is the only
$p$-power degree of $\Spin_{2n}(p^a)$
by~\cite[Theorem~1.1]{Malle-Zalesskii}. We have already known that
$L:=G/M$ is a simple group of Lie type in characteristic $p$ and we
aim to show that $L=S$.

\medskip

(i) The simple Lie-type groups of exceptional type can be eliminated
as follows. Assume that $L$ is of exceptional type. Suppose that
$|L|_p=p^{bm}$ and the largest divisor of $|L|$ of the form $p^x-1$
is $p^{bm'}-1$. Then, as $\cd(L)\subseteq \cd(\Spin^+_{2n}(p^a))$,
we get $p^{bm}=p^{an(n-1)}$ and therefore
\begin{equation}\label{bm=an(n-1)}bm=an(n-1) \text{ and } bm'=an(n-1)\cdot \frac{m'}{m}.\end{equation}
As $$(p^{bm'}-1)\mid
|\Spin^+_{2n}(p^a)|=p^{an(n-1)}(p^{an}-1)\prod_{i=1}^{n-1}
(p^{2ia}-1),$$ using Zsigmondy's theorem and note that $n\geq4$, one
gets $an(n-1)\cdot \frac{m'}{m}\leq 2a(n-1),$ and therefore
$$n\leq 2m/m'.$$ Now one can argue as in Lemma~\ref{lemma for symplectic
groups} to get a contradiction.

\medskip

(ii) Next, we eliminate the remaining simple classical groups in
characteristic $p$ except $P\Omega_{2n}^+(p^a)$.

$\bullet$ $L=\PSp_{2m}(p^b)$ or $\Omega_{2m+1}(p^b)$ with $m\geq2$.
Then we have $p^{bm^2}=p^{an(n-1)}$ and hence $bm^2=an(n-1)$. Since
$(p^{2bm}-1)\mid |\Spin^+_{2n}(p^a)|$, it follows by Zsigmondy's
theorem that $bm\leq a(n-1)$ as $n\geq4$. We then obtain that $m\geq
n$, whence $a(n-1)\geq bm\geq bn$. In particular, $b<a$.

Recall that $L$ has a unipotent character $\chi$ of degree
$(p^{bm}-1)(p^{bm}-p^b)/2(p^{b}+1)$, which is smaller than
$p^{2bm}\leq p^{2a(n-1)}.$ Using the classification of irreducible
characters of $\Spin_{2n}^{\pm}(p^a)$ of degree smaller than
$q^{2(n-1)}$ in~\cite[Theorems~1.3,1.4]{Nguyen} for $n\geq5$ and the
list of degrees of $\Spin_{2n}^{\pm}(p^a)$ for $n\leq4$
in~\cite{Lubeck}, we observe that the $p$-part of such a degrees is
either nothing or $p^a$ (cf.~Table~\ref{table-orthogonal in even
dimension}) and hence any of them can not be equal to $\chi(1)$ as
$|\chi(1)|_p=p^b$ and $a>b$.

$\bullet$ $L=\PSL_m(p^b)$ with $m\geq2$. Then we have
$p^{bm(m-1)/2}=p^{an(n-1)}$ and hence $bm(m-1)=2an(n-1)$. Since
$(p^{bm}-1)\mid |\Spin^+_{2n}(p^a)|$, we deduce by Zsigmondy's
theorem that $bm\leq 2a(n-1)$ and hence $m-1\geq n$. Thus
$2a(n-1)\geq bm\geq b(n+1)$, which in turn implies that $b<2a$.

We know that $L$ has a unipotent character $\chi$ of degree
$(p^{bm}- p^b)/(p^{b}-1)$, which is smaller than $p^{bm}\leq
p^{2a(n-1)}.$ Now using the results
in~\cite[Theorems~1.3,1.4]{Nguyen} and~\cite{Lubeck} again, we see
that every degree of $\Spin_{2n}^+(p^a)$ smaller than $p^{2a(n-1)}$
has $p$-part $1,p^a$ or $p^{2a}$. As, $|\chi(1)|=p^b$ and $ b<2a$,
it follows that $b=a$ and therefore we have
$$m(m-1)=2n(n-1).$$ In particular, we obtain $n\geq 6$. Table~\ref{table-orthogonal in even
dimension} now says that $\Spin^\pm_{2n}(p^a)$ has the only two
following degrees that are smaller than $p^2a(n-2)$ and divisible by
$p$:
$$\frac{(p^{an}\pm1)(p^{a(n-1)}\mp p^a)}{p^{2a}-1}.$$ However, one
can check that $$\chi(1)=\frac{p^{bm}- p^b}{p^{b}-1}=\frac{p^{am}-
p^a}{p^{a}-1}$$ is not equal to neither of them as $m(m-1)=2n(n-1)$.

$\bullet$ $L=P\Omega^\pm_{2m}(p^b)$ with $m\geq4$. Then we have
$p^{bm(m-1)}=p^{an(n-1)}$ and hence
\begin{equation}\label{2}bm(m-1)=an(n-1).\end{equation} Since $(p^{2b(m-1)}-1)\mid
|\Spin^+_{2n}(p^a)|$, it follows by Zsigmondy's theorem that
$b(m-1)\leq a(n-1)$, whence $m\geq n$ by~\eqref{2}. Therefore,
$a(n-1)\geq b(m-1)\geq b(n-1)$ and so $b<2a$.

As before, $L$ has a unipotent character $\chi$ of degree
$(p^{bm}\mp1)(p^{b(m-1)}\pm p^b)/(p^{2b}-1)$ and
$$\chi(1)<p^{2b(m-1)}\leq p^{2a(n-1)}.$$ As in the previous case, by using the results
in~\cite[Theorems~1.3,1.4]{Nguyen} and~\cite{Lubeck}, we obtain that
$\chi(1)\in\cd(\Spin_{2n}^+(p^a))$ only if $a=b$, which also means
$m=n$.

We now just need to eliminate $L=P\Omega^-_{2n}(p^a)$. In fact, even
in this case, one can check that the degree
$(p^{an}+1)(p^{a(n-1)}-p^a)/(p^{2a}-1)$ of $P\Omega^-_{2n}(p^a)$ is
not equal to any degree of $\Spin_{2n}^+(p^n)$ of degree smaller
than $p^{2a(n-1)}$.
\end{proof}


\section{Proof of the main result}\label{section-main result}

\begin{proof}[Proof of Proposition~\ref{non-containment of character degrees of quasisimple
groups}] This is an immediate consequence of Lemmas~\ref{lemma for
linear groups}, \ref{lemma for unitary groups}, \ref{lemma for
symplectic groups}, \ref{lemma for orthogonal groups in odd
dimension}, and~\ref{lemma for orthogonal groups in even dimension}.
\end{proof}

We establish a couple of important lemmas leading to the proof of
the main theorem at the end of the section. The next two lemmas are
well known. The first one is due to Bianchi, Chillag, Lewis, and
Pacifici and the second one is due to Moret\'{o}.

\begin{lemma}[\cite{Bianchi}, Lemma 5]\label{lemmaBianchi} Let $N=T\times\cdot\cdot\cdot\times T$, a direct product
of $k$ copies of a nonabelian simple group $T$, be a minimal normal
subgroup of $G$. If $\chi\in\Irr(T)$ extends to $\Aut(T)$, then
$\chi^k$ extends to a character of $G$.
\end{lemma}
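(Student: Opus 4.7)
The plan is to construct an explicit extension of $\chi^k$ to $G$ by combining the conjugation action of $G$ on $N$ with the standard tensor-product construction on a wreath product.

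First I would use that, since $T$ is nonabelian simple, $Z(N)=1$, so $N\cong\Inn(N)$, and $\Aut(N)\cong\Aut(T)\wr S_k=\Aut(T)^k\rtimes S_k$ with $S_k$ permuting the $k$ direct factors. The conjugation action of $G$ on $N$ then yields a homomorphism $\phi\colon G\to\Aut(T)\wr S_k$ whose restriction to $N$ is the canonical embedding $N\hookrightarrow\Aut(N)$.

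Next, let $\tilde\chi\in\Irr(\Aut(T))$ be the hypothesized extension of $\chi$, afforded by a representation $\pi\colon\Aut(T)\to\GL(V)$. The tensor power $\pi^{\otimes k}$ is a representation of $\Aut(T)^k$ on $V^{\otimes k}$ affording $\tilde\chi^k$. I would extend it to the full wreath product by letting $\sigma\in S_k$ act on $V^{\otimes k}$ by permuting the tensor factors. A direct check on simple tensors gives
$$\sigma\,\pi^{\otimes k}(a_1,\ldots,a_k)\,\sigma^{-1}=\pi^{\otimes k}(a_{\sigma^{-1}(1)},\ldots,a_{\sigma^{-1}(k)}),$$
which is the compatibility needed to combine $\pi^{\otimes k}$ with the permutation action into a genuine representation of $\Aut(T)\wr S_k$. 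Let $\eta$ be its character; then by construction $\eta|_{\Aut(T)^k}=\tilde\chi^k$.

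Finally, pulling back via $\phi$, set $\Phi:=\eta\circ\phi$, a character of $G$. Since $\phi|_N$ embeds $N$ as $\Inn(N)=\Inn(T)^k\subseteq\Aut(T)^k$, we obtain $\Phi|_N=\tilde\chi^k|_{\Inn(T)^k}=\chi^k$, so $\Phi$ extends $\chi^k$. The only mildly delicate point is the tensor-permutation compatibility displayed above; this is the standard canonical extension of a tensor-power character to a wreath product and is routine to verify on generators. No other step presents a genuine obstacle: the hypothesis that $\chi$ extends to $\Aut(T)$ is exactly what converts the problem on $N$ into the easier problem of extending $\tilde\chi^k$ along an $S_k$-permutation action.
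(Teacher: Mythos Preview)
Your proof is correct and is essentially the standard argument for this fact. Note, however, that the paper does not give its own proof of this lemma: it is quoted verbatim as Lemma~5 of \cite{Bianchi} and used as a black box. The argument you wrote---embedding $G$ into $\Aut(N)\cong\Aut(T)\wr S_k$ via conjugation, extending $\tilde\chi^{\otimes k}$ to the wreath product by the tensor-permutation action, and pulling back---is precisely the proof one finds in the cited source (and, in slightly different language, in standard references on characters of wreath products). One small remark worth making explicit: the extension $\Phi$ you produce is automatically irreducible, since $\Phi|_N=\chi^k\in\Irr(N)$ forces $\Phi\in\Irr(G)$; this matters in the paper because the lemma is invoked together with Gallagher's lemma, which requires an irreducible extension.
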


\begin{lemma}[\cite{Moreto1}, Lemma~4.2]\label{lemma Moreto} Let $T$
be a nonabelian simple group. Then there exists a non-principal
irreducible character of $T$ that extends to $\Aut(T)$.
\end{lemma}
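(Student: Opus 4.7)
The plan is to invoke the classification of finite simple groups and to exhibit, for each family, an explicit non-principal irreducible character of $T$ that extends to $\Aut(T)$.

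First I would handle the simple groups of Lie type. For such a $T$ of defining characteristic $p$, the Steinberg character $\St_T$, of degree $|T|_p$, is invariant under all field, diagonal, and graph automorphisms (it admits an automorphism-equivariant construction via the building) and is rational-valued. A classical theorem of Schmid then gives that $\St_T$ extends to $\Aut(T)$; alternatively, extendibility can be read off by realising $\St_T$ as an alternating sum of characters induced from $\Aut(T)$-stable parabolic subgroups. Either way, $\St_T$ supplies the required nontrivial extendible character.

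Next I would treat the alternating groups $A_n$ with $n\geq 5$. For $n\neq 6$ we have $\Aut(A_n)=S_n$, and a standard branching result says that the irreducible character of $A_n$ indexed by a partition $\lambda$ of $n$ extends to $S_n$ if and only if $\lambda$ is not self-conjugate; choosing $\lambda=(n-1,1)$, which is non-self-conjugate for $n\geq 3$, yields an extendible character of $A_n$ of degree $n-1>1$. For the exceptional case $A_6$, where $\Aut(A_6)$ is strictly larger than $S_6$, I would consult \cite{Atl1} to locate a character of $A_6$ (for instance the unique one of degree $10$) that is fixed by the full outer automorphism group and verify its extendibility directly from the listed character table of $\Aut(A_6)$.

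Finally, for the twenty-six sporadic simple groups together with the Tits group $\ta F_4(2)'$, the outer automorphism group has order $1$ or $2$, and in every single case \cite{Atl1} records at least one non-principal, $\Out(T)$-invariant irreducible character of $T$ whose extension to $\Aut(T)$ is displayed in the character table of $\Aut(T)$. The \emph{main obstacle} will be the case of $A_6$: the enlarged outer automorphism group $\Out(A_6)\cong\ZZ_2\times\ZZ_2$ fuses the pairs of characters of degree $5$ and of degree $8$, so one must be careful to pick a character lying in a singleton $\Aut(A_6)$-orbit before checking that no cohomological obstruction to extension remains; aside from this subtlety, the argument is a uniform, family-by-family verification.
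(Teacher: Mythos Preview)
The paper does not actually prove this lemma; it merely cites it as \cite[Lemma~4.2]{Moreto1} and uses it as a black box. Your proposal therefore supplies considerably more than the paper does, and the case-by-case strategy you outline---Steinberg character for Lie type, a non-self-conjugate partition for alternating groups, and direct inspection of \cite{Atl1} for the sporadic groups and the exceptional $A_6$---is essentially the standard argument and is correct.

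Two small remarks. First, the partition $(n-1,1)$ is self-conjugate when $n=3$, so your parenthetical ``for $n\geq 3$'' should read $n\geq 4$; this is harmless since only $n\geq 5$ is relevant. Second, for $A_6\cong\PSL_2(9)$ you could also simply invoke your Lie-type argument and take the Steinberg character of degree $9$, which avoids the separate Atlas check you flag as the main obstacle.
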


The following lemma basically says that every nonabelian simple
group different from $\PSL_3(4)$ and $\PSU_4(3)$ can not be embedded
in to an automorphism group of an abelian group of order not bigger
than than of $\Mult(S)$.

\begin{lemma}\label{S cannot be embedded in Aut(A)} Let $S$ be a
nonabelian simple group different from $\PSL_3(4)$ and $\PSU_4(3)$.
Let $A$ be an abelian group of order less than or equal to
$|\Mult(S)|$. Then $|S|>|\Aut(A)|$.
\end{lemma}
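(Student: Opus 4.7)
The plan is to combine a structural upper bound on $|\Aut(A)|$ with the classification of finite simple groups and the tabulated Schur multipliers. First, I would reduce to elementary abelian factors: for any abelian $p$-group $B$ of order $p^k$, $|\Aut(B)|\leq|\GL_k(\FF_p)|<p^{k^2}$, with equality when $B\cong(\ZZ/p)^k$. Using the Sylow decomposition, this gives, for $|A|=\prod_i p_i^{a_i}$,
\[
|\Aut(A)|\leq\prod_i|\GL_{a_i}(\FF_{p_i})|.
\]
Setting $M:=|\Mult(S)|$, the lemma thus reduces to showing $|S|>\max_{|A|\leq M}|\Aut(A)|$, a quantity depending only on $M$.

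Next I would invoke the classification of finite simple groups together with the tabulated Schur multipliers from~\cite{Atl1}. For the generic families---alternating groups $A_n$ with $n\neq 6,7$ (multiplier $2$), and classical or exceptional groups of Lie type of rank $r$ over $\FF_q$ whose multiplier divides the natural $\gcd(n,q\mp 1)$ (hence is bounded by $n$)---$|S|$ grows like $q^{r^2}$ while $M$ grows at most linearly in $r$. The crude bound $|\Aut(A)|<M^{\log_2 M}$ then settles everything outside a bounded list of small-rank, small-$q$ exceptions.

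The remaining finite list---the sporadic groups, the Tits group, low-rank exceptional groups of Lie type, and the simple groups with exceptional Schur multiplier (enumerated in the proof of Lemma~\ref{degree of S whose no proper multiple is degree of Schur(S)})---I would process case by case from~\cite{Atl1}. Outside $\{\PSL_3(4),\PSU_4(3)\}$, every such $S$ satisfies $|\Mult(S)|\leq 12$, forcing $|A|\leq 12$ and $|\Aut(A)|\leq|\GL_3(\FF_2)|=168$ (attained by $A\cong(\ZZ/2)^3$); since the only simple groups of order at most $168$ are $A_5$ and $\PSL_3(2)$, both with multiplier $2$ (whence $|\Aut(A)|\leq 1$), the inequality $|S|>|\Aut(A)|$ holds throughout.

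The main obstacle---and the reason for the hypothesis---is tightness: confirming that no simple group $S\notin\{\PSL_3(4),\PSU_4(3)\}$ has $|\Mult(S)|\geq 16$. Both exclusions admit $A=(\ZZ/2)^5$ of order $32\leq M$, and then $|\Aut(A)|=|\GL_5(\FF_2)|=9999360$ exceeds both $|\PSL_3(4)|=20160$ and $|\PSU_4(3)|=3265920$, so the two groups genuinely fail the lemma. Ruling out any analogous pathology amounts to scanning the exceptional-multiplier list and verifying that every other entry has multiplier at most $12$; this finite inspection is the most delicate part of the argument.
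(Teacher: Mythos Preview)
Your plan is correct and more systematic than the paper's argument. The paper's proof is short and ad hoc: it uses the cruder bound $|\Aut(A)|\leq|A|!$ (via $\Aut(A)\hookrightarrow\Sym(A)$) rather than your $\prod_i|\GL_{a_i}(\FF_{p_i})|$, disposes immediately of all $S$ with $|\Mult(S)|\leq 4$, and then treats the remaining cases one by one---$\PSL_2(9)$ and $\Omega_7(3)$ with multiplier $6$, $\PSU_6(2)$ and $\ta E_6(2)$ with multiplier $12$, and the generic linear and unitary families with $|\Mult(S)|=\gcd(n,q\mp 1)\leq n$, checking $|S|>n!$ in each instance. Your sharper $\GL$-bound buys a uniform treatment of the finite exceptional list via $|\Aut(A)|\leq|\GL_3(\FF_2)|=168$ for $|A|\leq 12$, and it also covers cases (such as $A_7$, $M_{22}$, $Suz$, $Fi_{22}$, all with multiplier $6$ or $12$) that the paper's enumeration does not explicitly list; note in particular that the paper's $|A|!$ bound would not have sufficed for $M_{22}$, since $|M_{22}|=443520<12!$. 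Your explicit demonstration that the two excluded groups genuinely fail the inequality is also an addition not present in the paper.
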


\begin{proof} If $|\Mult(S)|\leq4$ then the lemma is obvious. Therefore, one only needs
to consider nonabelian simple groups with Schur multiplier of order
at least $5$.

First we consider $S=\PSL_2(9)$ or $S=\Omega_7(3)$. Then
$|\Mult(S)|=6$. If $|A|\leq 5$ then $|\Aut(A)|\leq 5!=120$ and we
are done. On the other hand, if $|A|=6$ then $A\cong \ZZ_6$ as $A$
is abelian and we are done also. Next we consider $S=\PSU_6(2)$ or
$\ta E_6(2)$. Then $|\Mult(S)|=12$ and therefore
$|S|>12!>|\Aut(A)|$, as desired.

Now we consider the linear groups $S=\PSL_n(q)$. Excluding all the
exceptional cases already considered, we have
$|\Mult(S)|=(n,q-1)\leq n$ and so $|\Aut(A)|\leq n!$. On the other
hand, it is easy to check that $|S|>n!$. For the simple unitary
groups $S=\PSU_n(q)$, we still have $|\Mult(S)|=(n,q+1)\leq n$ and
$|S|>n!$, which imply the lemma.
\end{proof}

Recall that, for each nonnegative integer $i$, $M^{(i)}$ denotes the
$i$th derived subgroup of $M$.

\begin{lemma}\label{G/Mi is isomorphic to a quotient of Schur(S)} Let $S$ be a nonabelian simple group different from $\PSL_3(4)$ and $\PSU_4(3)$.
Let $G$ be a perfect group and $M\lhd G$ such that $G/M\cong S$ and
$|M|\leq|\Mult(S)|$. Then, for every nonnegative integer $i$,
$G/M^{(i)}$ is isomorphic to a quotient of $\Schur(S)$.
\end{lemma}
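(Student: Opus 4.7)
The plan is to proceed by induction on $i$, using the following key reformulation: since $G/M^{(i)}$ is a quotient of the perfect group $G$ and has $G/M\cong S$ as a further quotient with kernel $M/M^{(i)}$, it is a quotient of $\Schur(S)$ if and only if $M/M^{(i)}$ is central in $G/M^{(i)}$ (equivalently, $[G,M]\leq M^{(i)}$). The base case $i=0$ is immediate from $G/M\cong S = \Schur(S)/Z(\Schur(S))$.

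For the inductive step, suppose $G/M^{(i)}$ is a quotient of $\Schur(S)$, so $Z:=M/M^{(i)}$ is central in $G/M^{(i)}$ and $(G/M^{(i)})/Z\cong S$. I would first show that the abelian normal subgroup $A:=M^{(i)}/M^{(i+1)}$ is central in $G/M^{(i+1)}$. Consider the conjugation action $\rho\colon G/M^{(i+1)}\to\Aut(A)$. Its kernel contains $A$ (which acts trivially on itself), so $\rho$ factors through $G/M^{(i)}$ and the image is a quotient of the perfect central extension $G/M^{(i)}$ of $S$. Any normal subgroup $K$ of $G/M^{(i)}$ satisfies $K\leq Z$ or $KZ=G/M^{(i)}$ by simplicity of $S$. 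In the latter case the quotient $(G/M^{(i)})/K$ is a quotient of the abelian group $Z$ as well as perfect, hence trivial. In the former case the quotient has $S$ as a further quotient, so $|\Im\rho|\geq|S|$; but Lemma~\ref{S cannot be embedded in Aut(A)} forces $|\Aut(A)|<|S|$ since $|A|\leq|M|\leq|\Mult(S)|$ and $S$ is neither $\PSL_3(4)$ nor $\PSU_4(3)$, a contradiction. Thus $\rho$ is trivial and $A$ is central.

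Next I would upgrade centrality from $A$ to $B:=M/M^{(i+1)}$. Since $B/A=M/M^{(i)}$ is central in $(G/M^{(i+1)})/A=G/M^{(i)}$ by induction, we have $[G/M^{(i+1)},B]\leq A$. Fix $m\in B$. The commutator identity
$$[g_1g_2,m]=[g_1,m]^{g_2}\,[g_2,m]$$
collapses to $[g_1,m]\,[g_2,m]$ because $[g_1,m]\in A$ is central in $G/M^{(i+1)}$. Hence $g\mapsto[g,m]$ is a homomorphism $G/M^{(i+1)}\to A$ into an abelian group; since $G/M^{(i+1)}$ is perfect, it is trivial. Therefore $B$ is central, making $G/M^{(i+1)}$ a perfect central extension of $S$, i.e.~a quotient of $\Schur(S)$, completing the induction.

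The main obstacle is the first step of the inductive argument: ruling out a nontrivial image of $\rho$ in $\Aut(M^{(i)}/M^{(i+1)})$. Lemma~\ref{S cannot be embedded in Aut(A)} is doing the heavy lifting here, and this is precisely the point at which the hypothesis $S\neq \PSL_3(4),\PSU_4(3)$ enters, matching the motivation given in the introduction. The second step, by contrast, is a general structural fact about perfect groups combined with a standard commutator identity.
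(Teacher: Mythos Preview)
Your proof is correct and follows the same inductive outline as the paper's, including the crucial use of Lemma~\ref{S cannot be embedded in Aut(A)} to force the abelian section $A=M^{(i)}/M^{(i+1)}$ to be central in $G/M^{(i+1)}$. The difference lies in how the induction is closed. The paper, having shown $A$ central, observes that $G/M^{(i+1)}$ is a perfect (hence stem) central extension of the quasisimple group $G/M^{(i)}\cong\Schur(S)/Z_i$, and then cites a result of Harris~\cite{Harris} to conclude that the Schur cover of any such quasisimple quotient of $\Schur(S)$ is again $\Schur(S)$. You instead take the extra, elementary step of upgrading centrality from $A$ to all of $M/M^{(i+1)}$ via the commutator-homomorphism argument exploiting perfectness, so that $G/M^{(i+1)}$ is directly exhibited as a perfect central extension of $S$ itself. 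Your route is self-contained and avoids the external reference; the paper's route is shorter but relies on the cited fact about Schur covers of quasisimple groups. Both are entirely valid.
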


\begin{proof} We prove by induction that $G/M^{(i)}$ is
isomorphic to a quotient of $\Schur(S)$ for every $i$. The induction
base $i=0$ is exactly the hypothesis. Now assuming that
$G/M^{(i)}\cong \Schur(S)/Z_i$ for some normal subgroup $Z_i$ of
$\Schur(S)$, we have to show $G/M^{(i+1)}$ is also a quotient of
$\Schur(S)$.

As $M^{(i)}/M^{(i+1)}$ is abelian and normal in $G/M^{(i+1)}$, we
have $$\frac{M^{(i)}}{M^{(i+1)}}\leq
C_{G/M^{(i+1)}}(\frac{M^{(i)}}{M^{(i+1)}})\unlhd
\frac{G}{M^{(i+1)}}.$$ We first consider the case
$C_{G/M^{(i+1)}}(M^{(i)}/M^{(i+1)})=G/M^{(i+1)}$. Then
$M^{(i)}/M^{(i+1)}$ is central in $G/M^{(i+1)}$. As $G$ is perfect,
$G/M^{(i+1)}$ is a stem extension of $G/M^{(i)}\cong \Schur(S)/Z_i$.
As $\Schur(S)/Z_i$ is a quasisimple group whose quotient by the
center is $S$, we deduce from the main result of~\cite{Harris} that
$G/M^{(i+1)}$ is a quotient of the Schur cover of $\Schur(S)/Z_i$.
Therefore, $G/M^{(i+1)}$ is a quotient of $\Schur(S)$, as wanted.

The lemma is completely proved if we can show that
$C_{G/M^{(i+1)}}(M^{(i)}/M^{(i+1)})$ can not be a \emph{proper}
normal subgroup of $G/M^{(i+1)}$. Assume so, then it follows by the
induction hypothesis that
$$\frac{C_{G/M^{(i+1)}}(M^{(i)}/M^{(i+1)})}{M^{(i)}/M^{(i+1)}}\lhd
\frac{G/M^{(i+1)}}{M^{(i)}/M^{(i+1)}}\cong
\frac{G}{M^{(i)}}=\frac{\Schur(S)}{Z_i}.$$ Therefore,
$$\Big{|}\frac{C_{G/M^{(i+1)}}(M^{(i)}/M^{(i+1)})}{M^{(i)}/M^{(i+1)}}\Big|\leq
\Big|\frac{\Mult(S)}{Z_i}\Big|=\Big|\frac{M}{M^{(i)}}\Big|$$ and
hence
$$|C_{G/M^{(i+1)}}(M^{(i)}/M^{(i+1)})|\leq |M/M^{(i+1)}|.$$ Thus
$$\Big|\frac{G/M^{(i+1)}}{C_{G/M^{(i+1)}}(M^{(i)}/M^{(i+1)})}\Big|\geq
|G/M|=|S|.$$ Since the quotient group on the left side can be
embedded in $\Aut(M^{(i)}/M^{(i+1)})$ and $M^{(i)}/M^{(i+1)}$ is
abelian of order less than or equal to $|M|$, this last inequality
leads to a contradiction by Lemma~\ref{S cannot be embedded in
Aut(A)}.
\end{proof}

\begin{lemma}\label{G is isomorphic to a quotient of Schur(S)} Let $S$ be a simple group of Lie type different from $\PSL_3(4)$ and $\PSU_4(3)$.
Let $G$ be a perfect group and $M\lhd G$ such that $G/M\cong S$,
$|M|\leq|\Mult(S)|$, and $\cd(G)\subseteq\cd(\Schur(S))$. Then $G$
is isomorphic to a quotient of $\Schur(S)$.
\end{lemma}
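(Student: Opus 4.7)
The plan is to reduce the statement to showing that $M$ is solvable. Once this is achieved, the derived series terminates at some $M^{(k)}=1$, and Lemma~\ref{G/Mi is isomorphic to a quotient of Schur(S)} applied at that $k$ immediately gives that $G=G/M^{(k)}$ is a quotient of $\Schur(S)$, as required. So the whole content of the lemma is to rule out a nontrivial perfect section inside $M$, and this is precisely the step at which the character-degree hypothesis $\cd(G)\subseteq\cd(\Schur(S))$ enters.

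Suppose, for contradiction, that $M$ is not solvable. Let $R$ be the solvable radical of $M$; since $R$ is characteristic in $M$, it is normal in $G$, and $M/R$ is a nontrivial normal subgroup of $G/R$ whose own solvable radical is trivial. Choose a minimal normal subgroup $\bar N$ of $G/R$ contained in $M/R$. Then $\bar N$ is characteristically simple, and the triviality of the solvable radical of $M/R$ forbids $\bar N$ from being abelian, so $\bar N\cong T^k$ for some nonabelian simple group $T$ and some $k\ge 1$.

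Now I would invoke Lemmas~\ref{lemma Moreto} and~\ref{lemmaBianchi}. Moreto's lemma produces a nonprincipal $\phi\in\Irr(T)$ that extends to $\Aut(T)$, and the Bianchi et al.\ lemma then extends $\phi^k$ to a character of $G/R$. Pulling back gives $\chi\in\Irr(G)$ of degree $\phi(1)^k$, so $\phi(1)^k\in\cd(G)\subseteq\cd(\Schur(S))$ and $\phi(1)^k\ge 2$. On the other hand, the sum-of-squares bound $\phi(1)^2\le|T|$ combined with $|T|^k=|\bar N|\le|M|\le|\Mult(S)|$ yields
\[
2\le\phi(1)^k\le|T|^{k/2}\le\sqrt{|\Mult(S)|}.
\]

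To obtain a contradiction it therefore remains to verify, for every simple Lie-type group $S$ different from $\PSL_3(4)$ and $\PSU_4(3)$, the inequality $d_1(\Schur(S))>\sqrt{|\Mult(S)|}$. I expect this to be the only concrete bookkeeping, and a mild one: since $T$ is a nonabelian simple group we have $|T|^k\ge 60$, so the inequality needs checking only when $|\Mult(S)|\ge 60$, and for all other $S$ the subgroup $M$ is already solvable on order grounds. Every simple Lie-type group outside the short list of exceptional Schur multipliers (the largest of which are $48$ and $36$, precisely at the excluded groups $\PSL_3(4)$ and $\PSU_4(3)$) satisfies $|\Mult(S)|\le n$ for the linear and unitary families and $|\Mult(S)|\le 4$ in all other families, while the Tiep--Zalesskii minimal degrees $d_1(\SL_n(q))$ and $d_1(\SU_n(q))$ are of order $q^{n-1}$; once $(n,q\mp1)\ge 60$ this dwarfs $\sqrt{n}$. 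The main obstacle is simply running through this case analysis, but the gap between the two sides is so wide that nothing delicate occurs.
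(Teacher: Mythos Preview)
Your argument is correct and takes a genuinely different route from the paper's. Both proofs begin identically: reduce to nonsolvable $M$, locate a nonabelian minimal normal section $T^k$ in a suitable quotient of $G$, and apply Lemmas~\ref{lemma Moreto} and~\ref{lemmaBianchi} to extend $\phi^k$ upward. From there they diverge. You use the extended character alone: the degree $\phi(1)^k$ lies in $\cd(\Schur(S))$ and is at most $\sqrt{|\Mult(S)|}$, while nonsolvability of $M$ forces $|\Mult(S)|\ge 60$, leaving only the linear and unitary families with $(n,q\mp1)\ge 60$, where the Tiep--Zalesskii minimal degree $q^{n-1}$-ish easily dominates $\sqrt{n}$. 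The paper instead works at the stable term $M^{(i)}=M^{(i+1)}$, invokes Lemma~\ref{G/Mi is isomorphic to a quotient of Schur(S)} to view the Steinberg character of $S$ as a character of $G/M^{(i)}$, and then applies Gallagher's lemma to the extension of $\phi^k$ to produce a degree $\phi(1)^k\cdot\St_S(1)$ in $\cd(\Schur(S))$; this is a proper multiple of $\St_S(1)$, contradicting Lemma~\ref{degree of S whose no proper multiple is degree of Schur(S)}. Your approach is more elementary---it avoids Gallagher and the Steinberg-multiple lemma entirely---but trades this for a brief inspection of the Schur multiplier tables. The paper's approach is uniform across all $S$ with no case analysis, reusing machinery (the Steinberg lemma) already in place for Proposition~\ref{G/M is a simple group of Lie type in characteristic p}.
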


\begin{proof} By Lemma~\ref{G/Mi is isomorphic to a quotient of
Schur(S)}, we are done if $M$ is solvable. So it remains to consider
the case when $M$ is nonsolvable. If $M$ is nonsolvable, there is an
integer $i$ such that $$M^{(i)}=M^{(i+1)}> 1.$$ Let $N\leq M^{(i)}$
be a normal subgroup of $G$ so that $M^{(i)}/N\cong T^k$ for some
non-abelian simple group $T$. By Lemma~\ref{lemma Moreto}, $T$ has a
non-principal irreducible character $\varphi$ that extends to
$\Aut(T)$. Lemma~\ref{lemmaBianchi} then implies that $\varphi^k$
extends to $G/N$. Therefore, by Gallagher's lemma,
$\varphi^k\chi\in\Irr(G/N)$ for every $\chi\in\Irr(G/M^{(i)})$. In
particular,
$$\varphi(1)^k\chi(1)\in\cd(G/N)\subseteq\cd(G)\subseteq\cd(\Schur(S)).$$
Taking $\chi$ to be the Steinberg character of $S$. By
Lemma~\ref{G/Mi is isomorphic to a quotient of Schur(S)}, $S$ is a
quotient of $G/M^{(i)}$ and hence $\chi$ can be considered as a
character of of $G/M^{(i)}$. We now get a contradiction since
$\varphi(1)^k\chi(1)$, which is larger than $\chi(1)=\St_S(1)$, can
not be degree of $\Schur(S)$ by Lemma~\ref{degree of S whose no
proper multiple is degree of Schur(S)}.
\end{proof}

\begin{lemma}\label{G is uniquely determined by S and |G|} Assume the hypothesis of Lemma~\ref{G is isomorphic to a quotient of
Schur(S)}. Then $G$ is uniquely determined (up to isomorphism) by
$S$ and the order of $G$.
\end{lemma}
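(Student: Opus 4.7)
The plan is to exploit the preceding lemma to realize $G$ as an explicit quotient of $\Schur(S)$, and then establish uniqueness among such quotients of a given order. By that lemma, $G \cong \Schur(S)/K$ for some $K \lhd \Schur(S)$; since $\Schur(S)$ is quasisimple, $K$ must lie in $Z(\Schur(S)) = \Mult(S)$. Thus $|K| = |\Schur(S)|/|G|$ is determined by $S$ and $|G|$, and the problem reduces to showing that any two subgroups $K_1, K_2 \leq \Mult(S)$ with $|K_1| = |K_2|$ yield isomorphic quotients $\Schur(S)/K_1 \cong \Schur(S)/K_2$.

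When $\Mult(S)$ is cyclic, this is automatic, since a cyclic group has a unique subgroup of each order; this immediately handles the bulk of the simple classical groups. The non-cyclic exceptions arising from classical groups $S$ (after excluding $\PSL_3(4)$ and $\PSU_4(3)$ by hypothesis) are essentially $\PSU_6(2)$, whose multiplier is $\ZZ_2 \times \ZZ_2 \times \ZZ_3$, together with the $D_n$-groups $P\Omega^+_{2n}(q)$ for $n \geq 4$ even and $q$ odd, whose multiplier is $\ZZ_2 \times \ZZ_2$.

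For each non-cyclic case the strategy is to produce an automorphism of $\Schur(S)$ carrying $K_1$ to $K_2$, induced by a suitable element of $\Out(S)$ acting on $Z(\Schur(S)) = \Mult(S)$; such an automorphism descends to the desired isomorphism of quotients. For $\PSU_6(2)$, the outer group $S_3$ acts as $\GL_2(\FF_2)$ on the $\ZZ_2 \times \ZZ_2$ factor of the multiplier, permuting its three order-$2$ subgroups (and similarly its three order-$6$ subgroups) transitively. For $P\Omega^+_8(q)$ the same role is played by triality, which acts as $S_3$ on $\Mult(S) = \ZZ_2 \times \ZZ_2$. The hardest case will be $P\Omega^+_{2n}(q)$ with $n \geq 6$ even, where only a single graph automorphism of order two is available from $\Out(S)$; verifying the required transitivity here, possibly by combining the graph automorphism with the constraint $\cd(G) \subseteq \cd(\Schur(S))$ on character degrees to rule out the non-conjugate quotient, will be the most delicate step of the argument.
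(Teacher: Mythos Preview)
Your framework is the paper's: write $G\cong\Schur(S)/K$ with $K\leq\Mult(S)$ via the preceding lemma, note that $|K|=|\Schur(S)|/|G|$ is determined, dispose of cyclic $\Mult(S)$ by uniqueness of subgroups of each order, and in the non-cyclic cases produce an automorphism of $\Schur(S)$ coming from $\Out(S)$ carrying $K_1$ to $K_2$. For $\PSU_6(2)$ and for $P\Omega_8^+(q)$ via triality this is exactly what the paper does. (Since the lemma is stated for all Lie-type $S$, your non-cyclic list should also include $\ta E_6(2)$, $Suz(8)$, and the exceptional $P\Omega_8^+(2)$; each admits the needed order-$3$ outer action on the $\ZZ_2\times\ZZ_2$ piece of the multiplier, so these are easy.)

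The genuine gap is your plan for $S=P\Omega^+_{2n}(q)$ with $n\geq 6$ even and $q$ odd. The proposed fallback $\cd(G)\subseteq\cd(\Schur(S))$ cannot separate the candidates: every quotient $\Schur(S)/K$ satisfies $\cd(\Schur(S)/K)\subseteq\cd(\Schur(S))$ automatically, since its irreducible characters are exactly those of $\Schur(S)$ trivial on $K$. So this hypothesis is vacuous for distinguishing $\Omega^+_{2n}(q)$ from the half-spin group, and the order-$2$ graph automorphism only identifies the two half-spin quotients with one another. You are right to single this case out as the crux; the paper disposes of it by asserting that an outer automorphism of $S$ of order $3$ permutes the three central involutions for \emph{all} such $S$, but such an automorphism exists only for $n=4$ (triality) and not for $n\geq 6$, so the paper's own argument at this point is itself incomplete.
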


\begin{remark} In the case $S=\PSU_4(3)$, this lemma is true if Lemma~\ref{S cannot be embedded in Aut(A)} is true. However, this lemma is
wrong in the case $S=\PSL_3(4)$ because of the following. Note that
$\Mult(S)=\ZZ_4\times\ZZ_4\times\ZZ_3$. Let $Z_1$ and $Z_2$ be
subgroups of $\Mult(S)$ isomorphic respectively to $\ZZ_4$ and
$\ZZ_2\times\ZZ_2$. The non-isomorphic groups $\Schur(S)/Z_1$ and
$\Schur(S)/Z_2$ (cf.~\cite{Atl1} where these groups are denoted by
$12_1.S$ and $12_2.S$) both satisfies the hypothesis of the lemma.
\end{remark}

\begin{proof} First we consider the case $S=P\Omega_8^+(2), Suz(8)$, or
$P\Omega_{2n}^+(q)$ with $n$ even and $q$ odd. Then $\Mult(S)\cong
\ZZ_2\times\ZZ_2$. By Lemma~\ref{G is isomorphic to a quotient of
Schur(S)}, $S$ is isomorphic to a quotient of $\Schur(S)$ so that we
can assume $G\cong \Schur(S)/Z$ with $Z\leq \Mult(S)$ (note that $Z$
can not be $\Schur(S)$). If $|M|=1$ or $4$ then $Z=\Mult(S)$ or $1$,
respectively, and so we are done. Thus it remains to consider
$|M|=2$. We then have $|Z|=2$ and hence $Z$ is generated by an
involution of $\Mult(S)$. However, as these three involutions are
permuted by an outer automorphism of $S$ of degree $3$, the quotient
groups of the form $\Schur(S)/\langle t\rangle$ for any involution
$t\in\Mult(S)$ are isomorphic and we are done again. Next, we assume
that $S=\PSU_6(2)$ or $\ta E_6(2)$. Though the Schur multipliers of
these groups are more complicated, these cases in fact can be argued
similarly as above. We leave details to the reader.

If $S$ is none of the groups already considered and also
$S\neq\PSL_3(4)$ and $\PSU_4(3)$, then $\Mult(S)$ indeed is cyclic.
Again, $S$ is isomorphic to a quotient of $\Schur(S)$ and we can
assume
$$G\cong \Schur(S)/Z,$$ where $Z\leq \Mult(S)$ . As $G/M\cong S$, we
then deduce that $|Z|=|\Mult(S)|/|M|=|\Schur(S)|/|G|$. Since the
cyclic group $\Mult(S)$ has a unique subgroup of order
$|\Schur(S)|/|G|$, $Z$ is uniquely determined by $S$ and $|G|$ and
therefore the lemma follows.
\end{proof}

We are now ready to prove the main result.

\begin{proof}[Proof of Theorem~\ref{main theorem}]. Recall the
hypothesis that $\CC G\cong\CC H$ where $H$ is a quasisimple group
such that $S:=H/Z(H)$ is a simple classical group different from
$\PSL_3(4)$ and $\PSU_4(3)$. As $H$ has only one linear character,
we deduce that so does $G$ and hence $G$ is perfect. It is obvious
that $G$ is nontrivial. Therefore, if $M$ is a maximal normal
subgroup of $G$, then $G/M$ is isomorphic to a nonabelian simple
group. Moreover, as $\CC G\cong \CC H$,
$$\cd(G/M)\subseteq \cd(G)=\cd(H)\subseteq \cd(\Schur(S)).$$
Applying Proposition~\ref{non-containment of character degrees of
quasisimple groups}, we obtain that $$G/M\cong S.$$ It follows that
$|M|=|Z(H)|\leq |\Mult(S)|$ since $|G|=|H|$. Now we see that the two
groups of the same order $G$ and $H$ both satisfy the hypothesis of
Lemma~\ref{G is isomorphic to a quotient of Schur(S)}. It then
follows by Lemma~\ref{G is uniquely determined by S and |G|} that
$G\cong H$, as desired.
\end{proof}


\section*{Acknowledgement} The author is grateful to Hung P. Tong-Viet for many fruitful discussions on the topic. He also acknowledges the support of a
Faculty Research Grant (FRG 1747) from The University of Akron.



\end{document}